\title{\LARGE On the Odd Cycle Game and Connected Rules}
\author{
	Jan Corsten \and
	Adva Mond \and
	Alexey Pokrovskiy \and
	Christoph Spiegel \and
	Tibor Szab\'o}
\date{\today}
\crefname{chapter}{Chapter}{Chapters}
\crefname{section}{Section}{Sections}
\crefname{subsection}{Section}{Sections}
\crefname{subsubsection}{Section}{Sections}
\crefname{figure}{Figure}{Figures}
\crefname{table}{Table}{Tables}
\crefname{equation}{}{}
\numberwithin{equation}{section}
\theoremstyle{definition}
\newtheorem{question}{Question}
\crefname{question}{Question}{Questions}
\newtheorem{str}{Strategy}[section]
\crefname{str}{Strategy}{Strategies}
\newtheorem{defi}{Definition}[section]
\crefname{defi}{Definition}{Definitions}
\crefname{ex}{Example}{Examples}
\theoremstyle{plain}
\newtheorem{thm}[defi]{Theorem}
\crefname{thm}{Theorem}{Theorems}
\newtheorem{conj}{Conjecture}[section]
\crefname{conj}{Conjecture}{Conjectures}
\newtheorem{lemma}[defi]{Lemma}
\crefname{lemma}{Lemma}{Lemmas}
\newtheorem{cor}[defi]{Corollary}
\crefname{cor}{Corollary}{Corollaries}
\newtheorem{claim}[defi]{Claim}
\crefname{claim}{Claim}{Claims}
\crefname{prop}{Proposition}{Propositions}
\newtheorem{obs}[defi]{Observation}
\crefname{obs}{Observation}{Observations}
\theoremstyle{remark}
\crefname{rmk}{Remark}{Remarks}
\newcommand{\R}{\mathbb{R}}
\newcommand{\N}{\mathbb{N}}
\newcommand{\e}{\varepsilon}
\newcommand{\cF}{\mathcal{F}}
\newcommand{\dist}{\mathrm{dist}}
\newcommand{\card}[1]{\left| #1 \right|}
\begin{document}
	\sloppy
	\maketitle
	
	\begin{abstract}
		We study the positional game where two players, Maker and Breaker, alternately select respectively $1$ and $b$ previously unclaimed edges of $K_n$.
		Maker wins if she succeeds in claiming all edges of some odd cycle in $K_n$ and Breaker wins otherwise.
		Improving on a result of Bednarska and Pikhurko, we show that Maker wins the odd cycle game if $b \leq \big( (4 - \sqrt6)/5 + o(1) \big) \, n$.
		We furthermore introduce ``connected rules'' and study the odd cycle game under them, both in the Maker-Breaker as well as in the Client-Waiter variant.
	\end{abstract}
	
\section{Introduction}\label{sec-intro}

Positional games are two-player combinatorial games of perfect information that are played on a finite set $X$, called the \emph{board}, equipped with a family of subsets $ \mathcal F \subset 2^X $, called the \emph{winning sets}.
Throughout this paper, the board will always be given by $E(K_n)$, the edges of the complete graph on $n$ vertices.
In our results we will deal with winning sets that are formed by the odd cycles in that complete graph.

Our main focus will be on \emph{biased Maker-Breaker games}, which were introduced by Chv\'atal and Erd\H os~\cite{ErdosChvatal} and are perhaps the most commonly studied variant of positional games.
Here the two players, referred to as Maker and Breaker, take turns selecting respectively $1$ and $b$ previously unclaimed elements of the board $X$ until all elements are claimed, with Maker starting the game.
Maker wins if she succeeds in claiming all elements of some winning set $ F \in \mathcal F $ and Breaker wins otherwise.

The value $b$ is referred to as the \emph{bias} of the game.
Most common Maker-Breaker games are an easy win for Maker whenever $b = 1$ and the board is big enough, motivating the need to study the biased version of these games where Breaker is given additional power.
We note that if Breaker has a winning strategy for some $ b \in \N$, then he also has one for any $b' \geq b$.
It follows that there exists a \emph{threshold bias} $ b_{mb}(\cF)$ so that Breaker wins the game if and only if $b \geq b_{mb}(\cF)$.
Determining that threshold bias for various natural games is one of the central problems in the study of Maker-Breaker games.

For the cycle game Bednarska and Pikhurko~\cite{BedPik2} proved that $b_{mb}(\mathcal C_n) = \lceil n/2 \rceil - 1$, where $\mathcal C_n = \{E(C) : C \text{ cycle in } K_n \}$.
Furthermore, Krivelevich~\cite{Kriv} proved that Maker can always build a linearly-long cycle when $b \leq (1/2 - o(1)) \, n$.
In~\cite{BedPik1} Bednarska and Pikhurko discussed even cycle and odd cycle games, proving that $	b_{mb}(\mathcal{OC}_n) \geq (1 - 1/\sqrt 2 - o(1)) \, n \approx 0.2928 n$ where $\mathcal{OC}_n = \{E(C) : C \text{ odd cycle in } K_n \}$.
We give the following small improvement of their lower bound.
	
\begin{thm}\label{thm-mb-oc-maker}
	The threshold bias for the Maker-Breaker odd cycle game satisfies
	\begin{equation}
		b_{mb}(\mathcal{OC}_n) \geq \left(\frac{4 - \sqrt6 }5 - o(1) \right) n \approx 0.3101 n.
	\end{equation}
\end{thm}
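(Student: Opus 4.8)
The plan is to give Maker an explicit two-phase strategy that wins whenever $b \le \big((4-\sqrt6)/5 - o(1)\big)\,n$: in the first phase she builds a long path, and in the second she closes it into an odd cycle. It is convenient to phrase the goal of the second phase through a family of ``dangerous'' edges. Suppose Maker currently owns a path $P = v_0 v_1 \cdots v_\ell$. Then she wins immediately by claiming: (i) any chord $v_iv_j$ with $j-i$ even and at least $2$, closing the odd cycle $v_i v_{i+1}\cdots v_j v_i$; or (ii) given that she already owns an edge $w v_i$ with $w\notin V(P)$, any edge $w v_j$ with $i\not\equiv j \pmod 2$, closing the odd cycle $v_i v_{i+1}\cdots v_j w v_i$ (and, as a backup for type-(i) chords that Breaker has blocked, longer ``detours'' $v_i x y v_j$, $v_i x y z v_j,\dots$ through outside vertices may be added to the list). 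Write $\cD$ for the resulting family; Breaker avoids losing only if he eventually owns all of $\cD$, and Maker wins the instant a $\cD$-edge is unclaimed on her turn. Since $|\cD|=\Theta(n^2)$ while Breaker claims only $\Theta(b)$ edges per round, the whole thing is a race: Maker wants to grow and ``arm'' $\cD$ faster than Breaker can occupy it, and everything comes down to timing.

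In Phase~1 Maker builds $P$ up to length $\ell = \lfloor\mu n\rfloor$ for a parameter $\mu$ to be optimised, using the classical two-live-endpoints strategy: each turn she extends whichever endpoint still has a free edge to a vertex off the current path (taking any available winning move in $\cD$ instead, if one exists). Two features matter. First, Maker makes only $\approx\mu n$ moves here, so Breaker makes only $\approx b\mu n$ moves, which bounds how much of $\cD$ he can already have blocked when the phase ends. Second, and more delicately, Maker must run the extensions with enough \emph{freedom} that Breaker cannot predict which vertices end up on $P$, since $\cD$ is determined by $P$: were Breaker able to force the path, he could pre-block type-(i) chords and pre-defend type-(ii) cherries well in advance. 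Breaker can fight this by spending moves to burn edges out of the current endpoints and among the remaining vertices, thereby both shortening $P$ and restricting Maker's choices; so Phase~1 is itself a subgame whose outcome is a trade-off between the attainable length $\mu n$ and the residual unpredictability, and this trade-off is what the final constant is sensitive to.

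In Phase~2 Maker repeatedly attacks: on each turn she grabs a free $\cD$-edge if one exists (and wins), and otherwise she picks a fresh outside vertex $w$ and claims an edge $wv_i$, threatening to finish the type-(ii) cherry at $w$ on her next turn. To parry the attack on a given $w$, Breaker must own all $\approx\mu n/2$ edges from $w$ to one of the two parity classes of $P$; since $b<\mu n/2$ in the range of interest he cannot do this within the single turn he is granted after Maker's first edge at $w$, so the bulk of that defence must already be in place --- yet it could not have been set up during Phase~1, because Breaker did not then know $P$. Combining this with the timing constraint that \emph{all} $\approx\mu^2 n^2/4$ type-(i) chords must be Breaker's by the end of Phase~1 (which forces $\mu$ not to be too large relative to $c=b/n$) and with the Phase-1 length/unpredictability trade-off, one is led to a short optimisation over $\mu$ that yields the stated bound, with $\sqrt6$ entering through a quadratic in $c$.

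The main obstacle --- and where the proof must be careful rather than crude --- is exactly this interplay of the two phases. On one side, one has to make quantitative the idea that a sufficiently free path construction renders Breaker's early moves essentially useless against $\cD$, and to show that Breaker cannot simultaneously (a) keep $P$ short and predictable, (b) block all type-(i) chords in time, and (c) pre-defend the type-(ii) cherries; Breaker splits his moves adaptively among these tasks, and the bound must hold against the worst split. On the other side, enlarging $\cD$ by the longer detours and tracking how they interact with the parity classes of $P$ is precisely what pushes the constant past $1/4$ (the ceiling of the plain ``path $+$ odd chord'' argument) and past the previous bound $1-1/\sqrt2$; the exact value $(4-\sqrt6)/5$ is won or lost in this bookkeeping.
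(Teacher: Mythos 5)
Your plan is genuinely different from the paper's, but as written it is not a proof: the two steps on which the constant actually depends are both missing, and one of them rests on an idea that cannot work in the form you state it. First, your Phase~1 analysis leans on the path being ``unpredictable'' to Breaker, so that he cannot pre-block the even chords and pre-defend the cherries. Maker--Breaker games are perfect-information games; against any deterministic Maker strategy, Breaker knows exactly which path will be built, so there is no unpredictability to exploit. The standard way to salvage such an argument is to let Maker play randomly and show she wins with positive probability (\`a la Bednarska--{\L}uczak), but then the entire quantitative content of the proof lives in that probabilistic analysis, which you do not supply; you only name the trade-off. Second, the optimisation that is supposed to produce $(4-\sqrt6)/5$ is never carried out. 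The raw count you do give -- a path of length $\mu n$ forces Breaker to own all $\approx \mu^2 n^2/4$ even chords within $\approx b\mu n$ moves -- yields only $b \gtrsim \mu n/4 \le n/4$, which is \emph{below} the previously known bound $\big(1-1/\sqrt2\big)n \approx 0.2928n$; everything beyond $1/4$ must come from the cherries and detours, and you give no bookkeeping for them, only the assertion that ``the exact value is won or lost'' there. As it stands the argument proves nothing quantitative.

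For comparison, the paper avoids both difficulties by having Maker build not a path but a tree of stars: a large star around $w_0$, then a new centre $w_1$ attached to the tree, and so on. This makes one side of the tree's bipartition, $A_0\cup\dots\cup A_s$, as large as possible, and Breaker (to survive) must claim \emph{all} $\binom{|A_0|+s}{2}$ edges inside that side, all edges inside $\{w_0,\dots,w_s\}$, all edges from each $w_k$ to the untouched vertices, etc. Since Maker's tree has $N$ edges, Breaker has claimed at most $Nb$ edges, so $b \ge e(G_B)/N$; the proof then becomes a deterministic minimisation of $e(G)/N$ over an explicitly described family of graphs that Breaker's final graph must belong to, and the case analysis of that minimisation is where $(4-\sqrt6)/5$ comes from. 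The unbalanced bipartition is the whole point: it is what pushes the ratio above $1/4$, whereas a path's bipartition is balanced. If you want to pursue the path-plus-chords route, you would need either a rigorous random-path analysis or a deterministic potential/counting argument replacing the ``unpredictability'' heuristic, and in either case an explicit computation showing the detours and cherries close the gap from $1/4$ to the claimed constant.
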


Since building a cycle of odd length is certainly at least as difficult for Maker as building a cycle of arbitrary length, we have the upper bound 
\begin{equation}
	b_{mb}(\mathcal{OC}_n) \leq b_{mb}(\mathcal C_n) = \lceil n/2 \rceil -1.
\end{equation}
However, no upper bound separating the two values is known, motivating Bednarska and Pikhurko to ask the following question.
	
\begin{question}[Bednarska and Pikhurko~\cite{BedPik2}] \label{qu-mb-oc}
	Do we have $b_{mb}(\mathcal{OC}_n) = \big( 1/2 - o(1) \big) \, n$?
\end{question}

We note that both in Maker's strategy presented in~\cite{BedPik1} as well as in our strategy used to prove~\cref{thm-mb-oc-maker}, Maker is maintaining a single connected component throughout the game.
In order to get closer to an answer to~\cref{qu-mb-oc}, we therefore believe it is natural to study a version of the game in which Maker is in fact forced to keep her claimed edges connected, allowing Breaker to use this information to his advantage.
We refer to these games as \emph{connected Maker-Breaker games} and denote their threshold bias by $b_{mb}^c(\cF)$.
They follow exactly the same rules as previously laid out, with the exception that Maker is now only allowed to select edges that are incident to her previously claimed edges.
If Maker is not able to make such a selection, she loses the game.

Since playing connected is a restriction for Maker, we clearly have $ b_{mb}^c(\mathcal{OC}_n) \leq b_{mb}(\mathcal{OC}_n) \leq \lceil n/2 \rceil - 1 $ for any $ n \in \N $.
We prove an upper bound that separates the bias threshold of the connected game from $n/2$, showing that the answer to~\cref{qu-mb-oc} is \emph{``No.''} if playing connected can be shown to be optimal for Maker.

\begin{thm} \label{thm-mb-oc-breaker}
The threshold bias for the connected Maker-Breaker odd cycle game satisfies
\begin{equation}
	b_{mb}^c(\mathcal{OC}_n) \leq 0.47 n.
\end{equation}
for $n$ large enough.
\end{thm}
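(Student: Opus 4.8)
The plan is to give an explicit strategy for Breaker in the connected game when $b = 0.47n$, exploiting the fact that Maker's component is always a single connected subgraph. The key structural point is that in the connected game Maker builds, move by move, a connected graph $M_t$; an odd cycle appears precisely when Maker adds an edge inside one bipartition class of the (at that moment still bipartite) component. So Breaker wins if he can maintain the invariant that, letting $(A_t,B_t)$ be the bipartition of the current connected Maker graph, \emph{every} edge of $K_n$ lying inside $A_t$ or inside $B_t$ that is incident to a Maker vertex is already claimed by Breaker. Equivalently, Breaker wants to "dominate" the two sides of Maker's component: whenever Maker reaches a vertex $v$, all edges from $v$ into the side of the bipartition containing $v$ (restricted to vertices Maker can currently reach) should be Breaker's.

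First I would set up the potential/bookkeeping. Maker's component grows by at most one new vertex per round. Let $V_t$ denote the set of Maker-occupied vertices after round $t$, with bipartition $V_t = A_t \cup B_t$. The dangerous edges after round $t$ are $D_t = \binom{A_t}{2} \cup \binom{B_t}{2}$; Maker wins at round $t+1$ iff she can claim an unclaimed edge of $D_{t+1}$, but note $D_{t+1} \setminus D_t$ consists only of pairs involving the (at most one) newly added vertex $w$ together with old vertices on the same side — at most $|V_t|$ new dangerous edges. Breaker's task each round is thus: after Maker's move reveals the new vertex $w$ and fixes which side $w$ is on, Breaker must (a) claim the (unique, if any) dangerous edge Maker just threatened to close next, and (b) pre-empt enough of the new dangerous edges incident to $w$ so the invariant is restored. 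I would show Breaker can always afford this provided $b$ exceeds the number of dangerous edges he must newly cover per round, which is $O(|V_t|)$; since $|V_t| \le 2t+1$ and Maker has only made $t$ moves, a straightforward accounting — Breaker has claimed $bt \ge 0.47 n t$ edges total versus $O(t^2)$ dangerous edges that have ever existed — works only while $t$ is small, so a direct global count is too weak and a smarter local argument is needed.

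The efficient idea is to have Breaker keep one side of the bipartition \emph{small}: Breaker plays so that, say, $|B_t|$ stays bounded by a constant (or grows very slowly), by always claiming all $\binom{n}{2}$-type edges that would let Maker jump into $B$ — concretely, whenever Maker plays an edge $uv$ with $u \in A_t$, Breaker responds by claiming a large batch of edges from $v$ to $A_t$ and from $v$ to a reservoir $R$ of still-untouched vertices, forcing Maker's only productive continuations to re-enter $A$. With $|B_t|$ small, the dangerous edges inside $B$ are negligible, and the dangerous edges inside $A$ number $\binom{|A_t|}{2}$ — but Maker only ever threatens one specific such edge at a time (the one completing a shortest odd closure), and between consecutive such threats $|A_t|$ grows; here I would track that each round Breaker must block at most $|A_t| \le |V_t|$ genuinely new threats and that the constant $0.47$ is exactly what makes $b = 0.47n \ge |V_t|$ fail to hold for large components — so instead Breaker must prevent the component from ever getting large, i.e., isolate Maker. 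The cleanest route: Breaker plays to make $\deg_{\text{Maker-reachable}}(v)$ bounded for every new vertex $v$, by spending his $b$ edges to delete $v$'s entire remaining "same-side" neighbourhood and half its "cross-side" neighbourhood; one checks that $b = 0.47n$ suffices to do this before Maker can re-reach $v$, because Maker needs at least two moves to build any odd closure through a freshly added leaf. The constant $0.47$ then emerges from optimising the split of Breaker's $b$ edges between killing same-side edges (to prevent odd cycles directly) and cross-side edges (to slow the component's growth).

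The main obstacle I anticipate is precisely this optimisation and the verification that the strategy is consistent — i.e., that Breaker never "runs out" of the edges he needs to claim because Maker forced him to spend elsewhere. Concretely, one must rule out a Maker strategy that spreads threats across many vertices of $A_t$ simultaneously, forcing Breaker to cover $\Theta(|A_t|)$ dangerous edges in a single round while $|A_t|$ is already linear in $n$; the connectivity restriction on Maker is what prevents this (she can only attack from the $O(1)$ frontier of her component per move), and making that quantitative — bounding the number of \emph{new} dangerous pairs created per Maker move by something comfortably below $0.47n$ — is the crux. I would formalise it via a weight function assigning to each vertex $v \in V_t$ a "debt" equal to the number of same-side Maker-reachable non-Breaker edges at $v$, show Maker's move increases total debt by at most $|V_t| + O(1)$ while it is Breaker's responsibility to drive the debt of the newly relevant vertices to $0$, and close the induction by the inequality $0.47n \ge (\text{max debt increment Breaker must service})$, which holds for all $n$ large because the connected rule caps that increment at $(\tfrac{1}{2}-\Omega(1))n$.
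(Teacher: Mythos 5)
There is a genuine gap. You correctly identify the bipartite structure of Maker's connected component and the right high-level goal (Breaker must claim all edges inside each part, and must pre-claim edges at vertices Maker has not yet reached), but the argument never closes, and you acknowledge as much. The decisive quantitative claim --- that the connected rule caps the number of new dangerous edges Breaker must service per round at $(\tfrac12-\Omega(1))n$ --- is asserted rather than proved, and it is false without a specific pre-emptive strategy: when Maker absorbs a fresh vertex $w$ into the part $A_t$ with $|A_t|$ close to $n/2$, the new dangerous pairs at $w$ number $|A_t|$ minus the number of edges from $w$ into $A_t$ that Breaker has already claimed, which exceeds $0.47n$ unless Breaker invested in $w$ beforehand. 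The difficulty is that Maker, not Breaker, decides which part $w$ joins (she chooses which endpoint of her new edge lies in her tree), so Breaker cannot know in advance which half of $w$'s edges will become dangerous. Your intermediate idea of keeping one part small fails for the same reason.

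The paper resolves exactly this by having Breaker distribute his surplus edges from the untouched set $R_s$ \emph{evenly} between the two parts, vertex by vertex, so that whichever side a reserved vertex eventually lands on, roughly half of Breaker's investment in it is useful and half is ``wasted'' as a cross edge between the parts. The proof then hinges on a global accounting of these wasted edges: if Breaker loses, a counting argument shows he wasted almost all $|V_t^1|\,|V_t^2|$ cross pairs, while degree-regularity lemmas and a comparison of part sizes against average degrees at two time points show the balanced strategy necessarily saves more than that, a contradiction for $\e=0.06$, i.e.\ $b=\lceil(n-\e n)/2\rceil=0.47n$. None of this machinery --- the balanced distribution rule, the saved-edge count, or the optimisation producing $0.47$ --- is present in your sketch; the constant $0.47$ never emerges from any computation. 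What you have is a correct identification of the crux, not a proof.
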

	
Lastly, we also study the Client-Waiter\footnote{This variant of positional games was originally introduced by Beck under the name Chooser-Picker games~\cite{BeckSecondMoment,TicTacToe}. However, as this terminology frequently caused confusion, we will use the names Client and Waiter as suggested in~\cite{PickerChooser}.} version of the odd cycle game.
In every round of a \emph{biased Client-Waiter game}, Waiter offers Client $1 \leq t \leq b+1$ previously unclaimed elements of the board.
Client chooses exactly one of these elements and the remaining $t-1$ are claimed by Waiter.
Client wins if she has claimed all elements of some winning set and Waiter wins otherwise.
Here Waiter starts the game.

We note that the reason for allowing Waiter to offer less than $b+1$ elements per round is to ensure that we are again guaranteed to have a threshold bias $b_{cw}(\cF)$, so that Waiter wins the game if and only if $b \geq b_{cw}(\cF)$.
This version is commonly referred to as the \emph{monotone} version.
In the \emph{strict} version, where Waiter has to offer \emph{exactly} $b+1$ elements per round, one only has an upper and lower threshold bias.
This was not an issue in the Maker-Breaker variant where bonus moves never harm players.
For more information on bias monotonicity, we refer the reader to~\cite{Tiborbook}.
	
Hefetz, Krivelevich and Tan~\cite{WC-CW-oddC} studied the Client-Waiter cycle game and proved that $b_{cw}(\mathcal C_n) = \lceil n/2 \rceil - 1$.
Moreover, Krivelevich~\cite{Kriv} proved that Client can always build a linearly long cycle if $b \leq \big(1/2 - o(1)\big) \, n$.
For the odd cycle game, we trivially have $ b_{cw}(\mathcal{OC}_n) \leq b_{cw}(\mathcal{C}_n) = \lceil n/2 \rceil -1$.
Using a random strategy for Client, Hefetz, Krivelevich and Tan~\cite{WC-CW-oddC} proved that $b_{cw}(\mathcal{OC}_n) \geq \big( 1/(4 \log 2) - o(1) \big) \, n \approx 0.3606 n$ and conjectured that the upper bound is asymptotically tight.
	
\begin{conj}[Hefetz, Krivelevich and Tan~\cite{WC-CW-oddC}] \label{conj-cw-oc}
	We have $ b_{cw}(\mathcal{OC}_n) = \big( 1/2 - o(1) \big) \,n$.
\end{conj}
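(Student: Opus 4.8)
The plan is to prove the lower bound $b_{cw}(\mathcal{OC}_n) \ge (1/2 - o(1))n$; the matching upper bound is immediate since every odd cycle is a cycle and $b_{cw}(\mathcal C_n) = \lceil n/2 \rceil - 1$. So fix a small $\e > 0$, put $b = \lfloor (1/2 - \e) n \rfloor$, and look for a winning strategy for Client. The first reduction is that it suffices for Client to claim a connected non-bipartite subgraph, since such a subgraph contains an odd cycle. Throughout, Client keeps her claimed graph bipartite — maintaining, and minimally re-colouring under merges, a proper $2$-colouring of each of her components — until the first time Waiter offers a \emph{closing edge}: an unclaimed edge whose two endpoints lie in a common component of Client's current graph and receive the same colour there. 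Adding such an edge creates an odd cycle, so Client takes it and wins. Note that a closing edge, once created by one of Client's merges, stays closing, and is never claimed by Waiter (he would be handing Client the game), so the number of unclaimed closing edges is non-decreasing.

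The engine is the following elementary fact: in any Client-Waiter game a nonempty family of singleton winning sets is won by Client, because Waiter may decline to offer a member only while at least $b+1$ unclaimed non-members remain, and is forced to present one once they are exhausted. To set this up, Client should, early in the game, build a large connected bipartite subgraph $H$ on $\Omega(n)$ vertices — for instance by running Krivelevich's Client strategy~\cite{Kriv} for the ordinary cycle game, which is valid for $b \le (1/2 - o(1))n$, to obtain a linearly long cycle, and being done outright if that cycle is odd. Once such an $H$ is in place, its closing edges are exactly the unclaimed ``same-class'' pairs inside $H$, of which there are $\Omega(|V(H)|^2)$; each of these is, on its own, a winning set for Client, so the elementary fact hands her a win the moment the remaining ``safe'' edges drop below $b+1$ in number — and this must happen while the board still has edges, provided $\Omega(n)$ of $H$'s same-class pairs are still unclaimed at the time $H$ is completed.

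The main obstacle is precisely that last proviso. Over the whole game Waiter claims all but $O(n)$ of the $\binom n2$ edges, so \emph{a priori} he may already own almost every same-class pair of whatever $H$ Client produces; building $H$ in ignorance of Waiter's choices does not prevent this. The real work is to grow $H$ \emph{adaptively and robustly}: Client should enlarge her bipartite component(s) gradually into parts of the board where Waiter has been inactive, and keep her final colouring of $H$ undetermined as long as possible, so that any edge Waiter owns inside $V(H)$ could only be a ``spent'' closing edge if he claimed it back when its endpoints still lay in different Client-components. I expect the clean way to run this is a Beck-type potential — morally, the number of remaining safe edges minus $b$, which Client keeps on a downward trajectory by making her merges generate many new closing edges — forced below zero before the board empties. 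Squeezing the resulting constant down to $b \approx n/2$, rather than the Hefetz, Krivelevich and Tan value $1/(4\log 2)$ coming from a purely random Client~\cite{WC-CW-oddC}, is the crux: one must show that as soon as $b < (1/2 - \e)n$ Client can grow her components fast enough, in a sufficiently fresh part of the board, that $\Omega(n)$ of their same-class pairs survive unclaimed — equivalently, that Waiter cannot simultaneously keep Client's graph bipartite and prevent it from amassing the $\Omega(n)$ independent cycles that a mere edge count already forces on it once $b$ falls below $n/2$, at least one of which is then odd. (A possibly cleaner route, in keeping with the paper's ``connected rules'' theme, is to run the same potential argument for the connected Client-Waiter variant, where Client's component structure is under tighter control.)
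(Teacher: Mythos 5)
This statement is an open conjecture of Hefetz, Krivelevich and Tan; the paper does not prove it. What the paper proves is \cref{thm-client-occ}, the analogue under \emph{connected} rules, which yields only the trivial upper bound $b_{cw}(\mathcal{OC}_n) \leq \lceil n/2\rceil - 1$ for the unrestricted game, since connected rules restrict Waiter and hence $b_{cw}^c(\mathcal{OC}_n) \geq b_{cw}(\mathcal{OC}_n)$. Your opening reduction is sound and is the same one the paper uses in \cref{sec-client-occ}: because every board element is eventually offered in the monotone game and Waiter can only claim an edge he has offered, a single unclaimed edge inside a colour class of a component of Client's bipartite graph is a guaranteed win for Client. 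So the whole content of the conjecture is whether Client can force such an edge to exist when $b = (1/2-\e)n$.

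That is exactly the step your proposal does not supply, and you say so yourself (``the main obstacle'', ``the real work'', ``the crux''). Nothing in the sketch shows that Waiter cannot claim essentially all same-class pairs of whatever bipartite graph Client builds: Waiter claims all but $O(n)$ of the $\binom n2$ edges, and the edge-count/cycle-space observation you invoke only shows Client's final graph contains $\Omega(n)$ independent cycles \emph{if she is allowed to close them}, which is precisely what Waiter prevents by pre-claiming the would-be closing edges before their endpoints are merged. The best that is actually known along these lines is the $\big(1/(4\log 2) - o(1)\big)n \approx 0.36n$ bound of \cite{WC-CW-oddC} via a random Client. Your fallback of running the argument in the connected variant does not rescue the claim: a Client win under connected rules is a statement about a \emph{weaker} Waiter, so it bounds $b_{cw}^c$ from below, not $b_{cw}$. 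As written, the proposal is a plan with its central lemma missing, and that missing lemma is the conjecture itself.
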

	
As with \cref{qu-mb-oc}, we will study a connected version of Client-Waiter games in order to get closer to an answer to \cref{conj-cw-oc}.
In the \emph{connected Client-Waiter game}, Waiter is only allowed to offer edges which are adjacent to some edge already claimed by Client.
If there are no such edges left, Client wins the game.
During the first round Waiter has to offer edges that are all incident to a single arbitrary vertex.\footnote{A more formal definition of these rules would consist of defining an active set of vertices at any point in the game and forcing Waiter to offer edges incident to one of these vertices. If we want Client's graph to be connected, this set of active vertices must initially consist of a single vertex, and a new vertex becomes active if and only if an ege incident to it has been claimed. Since we are playing on a complete graph, the initial vertex may be chosen arbitrarily. When playing on a non-complete graph, e.g.\ a random graph, Client would be allowed to choose this vertex at the beginning of the game.}
We denote the threshold bias of these games by $ b_{cw}^c(\cF) $, so that Waiter wins the game if and only if $b \geq b_{cw}^c(\cF) $.

Until now Client's role was most comparable to that of Maker, since both of these players are trying to claim all elements of a winning set.
However, whereas the introduction of connected rules constituted a disadvantage for Maker, they are now a restriction for Waiter, so that we have $b_{cw}^c(\cF) \geq b_{cw}(\cF)$.
Furthermore, Waiter's strategy presented in \cite{WC-CW-oddC} already follows these rules and therefore $b_{cw}^c(\mathcal C_n) = b_{cw}(\mathcal C_n) = \lceil n/2 \rceil - 1$.
Regarding the odd cycle game, we trivially have $ b_{cw}^c(\mathcal{OC}_n) \leq b_{cw}^c(\mathcal{C}_n) = \lceil n/2 \rceil -1$ and $b_{cw}^c(\mathcal{OC}_n) \geq b_{cw}(\mathcal{OC}_n) \geq \big(1/(4\log 2) - o(1) \big) \, n $.
We show that the upper bound is tight, that is we prove \cref{conj-cw-oc} under connected rules.
	
\begin{thm} \label{thm-client-occ}
	The threshold bias for the connected Client-Waiter odd cycle game satisfies
	\begin{equation}
		b_{cw}^c(\mathcal{OC}_n) = \lceil n/2 \rceil -1
	\end{equation}
	for every $n \in \N$.
\end{thm}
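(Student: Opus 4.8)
The plan is to observe that the upper bound $b_{cw}^c(\mathcal{OC}_n)\le\lceil n/2\rceil-1$ is already in hand, since $b_{cw}^c(\mathcal{OC}_n)\le b_{cw}^c(\mathcal{C}_n)=\lceil n/2\rceil-1$ as recorded in the introduction, and to concentrate entirely on the matching lower bound: Client wins whenever $b\le\lceil n/2\rceil-2$. Throughout, let $H$ be Client's current graph and $A$ the set of active vertices, so that $H$ is connected with $V(H)\subseteq A$. As long as $H$ is bipartite it has a unique proper $2$-colouring with colour classes $A_1,A_2$; call an unclaimed edge \emph{dangerous} if it lies inside one class and \emph{safe} otherwise (an $A_1$--$A_2$ edge, or an edge leaving $A$). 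The elementary but decisive point is that Waiter may never offer a dangerous edge --- Client would take it and immediately close an odd cycle --- so every edge Waiter offers, and hence every edge Waiter ever claims, is safe and incident to $A$ at the moment it is offered; in particular, as soon as every unclaimed edge incident to $A$ is dangerous (or no unclaimed edge is incident to $A$ at all), Client wins.

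Client's strategy is to grow $H$ greedily as a path $v_0v_1v_2\cdots$: each round she takes a dangerous edge if one is offered (winning), otherwise an edge extending the path at an endpoint to a fresh vertex if one is offered, and otherwise --- i.e.\ Waiter offers only $A_1$--$A_2$ chords --- any such chord. Taking a chord only adds an even cycle, so $H$ stays connected and bipartite, and one checks that Client's path must reach all of $V(K_n)$ unless she has already won, since Waiter can stall a growth round only by offering $A_1$--$A_2$ chords, of which fewer than $n^2/4$ exist. The heart of the matter is then a potential/counting estimate guaranteeing that when the supply of safe edges incident to $A$ finally runs out there is still an unclaimed dangerous edge, so Waiter must offer it. One tracks the number of \emph{unclaimed} dangerous edges: a dangerous edge can be claimed by Waiter only while one of its ends still lies outside $A$ (a leaving edge), and Waiter claims at most $b\le\lceil n/2\rceil-2$ edges per round, whereas each time Client extends her path to a new vertex $v_k$ about $k/2$ of the $k$ chords from $v_k$ into $\{v_0,\dots,v_{k-1}\}$ join $v_k$ to its own colour class and so become dangerous at once. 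Hence once the path has length exceeding $2b$ --- and since $2b<n$ this does happen, indeed the path spans $V(K_n)$ --- Client creates unclaimed dangerous edges faster than Waiter can pre-empt them; combined with the global count that $(b+1)(n-1)<\binom n2$ for $b\le\lceil n/2\rceil-2$ (exactly the inequality driving the cycle-game bound), which leaves an unclaimed edge incident to $A$ when the path first spans $V(K_n)$, this forces such an edge to be dangerous, and Client wins.

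The step I expect to be the main obstacle is making the potential argument robust against Waiter's counterplay. Waiter will try to keep Client's bipartition balanced, minimising the number of new dangerous edges per growth round, and, more insidiously, will claim edges $v_iw$ with $w\notin A$ gambling that $w$ later joins $v_i$'s colour class, so that $v_iw$ becomes a dangerous edge Waiter already owns and hence can never be forced on him. To counter this, Client, when she introduces a new vertex $w$, should choose among Waiter's offered extension edges one that attaches $w$ to $A$ through the colour class containing the majority of Waiter's current edges at $w$, so that most of those edges become safe $A_1$--$A_2$ chords rather than dangerous edges. Verifying that such a choice is always available, that it meshes with the path-growth bookkeeping, and that it defeats Waiter simultaneously against each of his per-round options (extend, play an $A_1$--$A_2$ chord, play a leaving edge) is the technical core; the arithmetic is then arranged so that the break-even bias is precisely $\lceil n/2\rceil-1$, matching the upper bound and the cycle game.
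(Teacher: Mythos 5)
Your upper bound and your opening observation are both fine and match the paper's starting point: Waiter can never offer an edge joining two vertices of the same colour class of Client's (bipartite, connected) graph, and once such an edge exists unclaimed it is a permanent threat that Waiter must eventually offer. The problem is that the entire quantitative core of your lower bound --- the potential count showing an unclaimed ``dangerous'' edge survives --- is exactly the part you leave unverified, and the obstruction you yourself name is genuine and is not defused by your majority-class rule. Client cannot choose which class a new vertex joins: Waiter controls the offers, and he can pre-claim every edge from a fresh vertex $w$ to one class over several rounds and then offer only edges from $w$ to the other class, so that $w$ is forced into the first class and all of Waiter's pre-claims become same-class edges he already owns. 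Nor can Client force her graph to be a path (Waiter need not offer any edge at a path endpoint), so the ``about $k/2$ of the $k$ chords land in $v_k$'s own class'' estimate, which relies on the alternating colouring along a path, is not available; for a general tree the classes can be arbitrarily unbalanced. Finally, the crude budget ($b$ pre-claims per round over $n-1$ rounds is about $n^2/4$, comparable to the total number of within-class pairs) means no first-moment count closes the gap without additional structural control over Waiter's offers.

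The paper's proof supplies precisely that structural control and thereby avoids counting dangerous edges altogether. Client's strategy includes a forward-looking move absent from yours: if claiming some offered edge would \emph{leave behind} an unclaimed edge inside a colour class, she claims it, creating a permanent threat. This forces Waiter, whenever he introduces a new vertex $y$, to offer in that same round \emph{every} unclaimed edge between $y$ and Client's graph (to both classes); the pre-claiming issue is handled by the notion of a \emph{critical} vertex, and the lemmas show at most one class is ever critical and a critical vertex has exactly one unclaimed edge left to Client's graph. Consequently Client's graph is always a tree gaining exactly one vertex per round, with no unclaimed edges inside it, so the game can only end with all $\binom{n}{2}$ edges claimed; at most $b+1$ edges are claimed per round, so with $b = \lceil n/2\rceil - 2$ the game lasts at least $n$ rounds and Client's tree would need $n+1$ vertices --- a contradiction. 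To complete your argument you would essentially have to rediscover these ``offer everything to the new vertex'' lemmas, at which point the dangerous-edge potential becomes unnecessary.
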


\medskip

We will use the following notation throughout the rest of the paper.
Given a natural number $n$ we write $[n] = \{1, \ldots, n \}$.
For a graph $G$, let $V=V(G)$ and $E=E(G)$ respectively denote its set of vertices and edges.
We write $v(G) = |V(G)|$ and $e(G) = |E(G)|$ for their cardinalities.
For a vertex $v \in V(G)$ and a set of vertices $A\subset V(G)$ let $\deg(v, A)$ denote the number of neighbours of $v$ in $A$.
Moreover, for $A, B \subset V(G)$ we use $e(A,B)$ to denote the number of edges connecting a vertex of $A$ with a vertex of $B$.
We also use $e(A)$ to denote the number of edges between vertices of $A$.
At any point during the game, we will refer to the graph given by the edges claimed by one of the players as \emph{Maker's graph}, \emph{Breaker's graph} and so forth.
These do not include isolated vertices that are not part of any edge claimed by that player.

\medskip \noindent \textbf{Outline. } We start by proving \cref{thm-mb-oc-maker} by providing a strategy for Maker in the Maker-Breaker odd cycle game in \cref{sec-mb-oc-maker}.
We then prove \cref{thm-mb-oc-breaker} by providing a strategy for Breaker under connected rules in \cref{sec-mb-oc-breaker}.
Lastly, we establish the exact bias threshold for the connected Client-Waiter odd cycle game by proving \cref{thm-client-occ} in \cref{sec-client-occ}.
Concluding remarks and open questions can be found in \cref{sec-remarks}.

\section{A strategy for Maker\texorpdfstring{ -- Proof of \cref{thm-mb-oc-maker}}{}} \label{sec-mb-oc-maker}

In this section we prove \cref{thm-mb-oc-maker} by presenting a strategy for Maker in the Maker-Breaker odd cycle game.
The basic idea will be for her to build a tree in which one side of its bipartition is as large as possible.
Maker achieves this by initially building a large star around some vertex until Breaker stops her from doing so.
She then connects an arbitrary new vertex to her tree and continues by building a star around that vertex, always using only vertices which are not already part of her tree and therefore never closing an even cycle.
She follows this principle until she is either able to close an odd cycle or she is forced to forfeit the game.

\begin{str} \label{str-mb-oc-maker}
	Throughout this strategy, let $V$ be the set of vertices in Maker's graph and $R = [n]\setminus V$.
	The strategy works in phases, starting with phase $0$.
	Let $ w_0 \in [n] $ be an arbitrary but fixed vertex.
	In every round of phase $k \geq 0$, Maker does the following.
	\begin{enumerate}[(i)] \setlength{\itemsep}{0pt}
		\item If there is an unclaimed edge closing an odd cycle, she claims it.
		\item Otherwise, if there is an unclaimed edge between $ w_k $ and $ R$, she claims it.
		\item Otherwise, if there is a vertex $ u \in R $ which is adjacent to $V \setminus \{ w_0, \dots, w_k \}$ via an unclaimed edge and the degree of that vertex in Breaker's graph is at most $|R|-b-2$, she claims this edge.
		The new vertex becomes $w_{k+1}$ and she proceeds to the next phase.
		\item Otherwise, she forfeits.
	\end{enumerate}
\end{str}

Before proving that this is a winning strategy, given a sufficiently small bias $b$, we first define a class of graphs $\mathbb G_{n,b}$ that fulfil certain properties.
In the proof of \cref{thm-mb-oc-maker} we will show that if Breaker wins the game then his final graph must belong to $\mathbb G_{n,b}$.
This will turn the question of what values of the bias \cref{str-mb-oc-maker} is successful against into the problem of minimising a certain parameter over all graphs in $\mathbb G_{n,b}$.
	
\begin{defi}\label{def:Gnq}
	We define $\mathbb G_{n,b} $ to be the set of tuples $(G, v_0, A_0, \ldots, v_s, A_s)$ satisfying the following properties.
	\begin{enumerate}[(a)]
		\item $v_0, \ldots, v_s \in [n]$ are distinct.
		\item $A_0, \ldots, A_s \subset [n] \setminus \{v_0,\ldots v_s\}$ are pairwise disjoint non-empty sets.
		\item $G$ is a graph with $V(G) \subseteq [n]$ that contains all edges which are
		\begin{itemize}
			\item[-] inside $ \{v_0, \ldots, v_s\} $ and inside $A_0 \cup \dots \cup A_s$,
			\item[-] between $\{v_0, \ldots, v_s\}$ and $ R= [n] \setminus \left(\{v_0, \ldots v_s\} \cup A_0 \cup \dots \cup A_s \right)$ and
			\item[-] between $ v_i $ and $ A_j $ for all $ 0\leq i<j\leq s $.
		\end{itemize}
		\item Every $ v \in R$ is either fully connected to $[n]\setminus R $ in $G$ or we have $ \deg(v,R) \geq |R|-b-1 $.
	\end{enumerate}
	Note that $G$ is allowed to contain more than the required edges.
	When the elements of the tuple are clear from context, we will sometimes write $G$ as an abbreviation for $(G, v_0, A_0, \ldots, v_s, A_s)$.
\end{defi}
	
\begin{proof}[Proof of \cref{thm-mb-oc-maker}]
Assume that Breaker wins the game even though Maker plays according to \cref{str-mb-oc-maker}.
Let $B_k$ denote the set of neighbours of $w_k$ claimed in phase $k$ by Maker.
Note that all edges connecting $w_k$ and $B_k$ are claimed in part (ii) of phase $k$ of the strategy.
Let $t \geq 0$ be the phase of Maker's strategy in which she either has to forfeit or the game ends naturally.
Denote Maker's and Breaker's final graphs respectively by $G_M$ and $G_B$ and note that, as previously stated, these do not include isolated vertices that are not part of any edge claimed by that player.
	
\begin{obs}\label{obs:mb-oc-maker}
	$G_M$ is a tree with bipartition parts $\{w_0, \ldots, w_t\}$ and $B_0 \cup \dots \cup B_t$.
\end{obs}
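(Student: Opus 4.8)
The plan is an induction on the rounds of the game, carrying along a structural invariant for Maker's graph. First note that, since Breaker wins, part~(i) of \cref{str-mb-oc-maker} is never executed: if it were, Maker would claim an edge closing an odd cycle and win. Hence every edge of $G_M$ is claimed in part~(ii) or part~(iii).

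I would prove the following invariant: after each of Maker's moves, if $k$ is the current phase, then Maker's graph is a tree whose vertex set is the disjoint union of $\{w_0,\dots,w_k\}$ and the sets $B_0,\dots,B_k$ of neighbours added in part~(ii) of phases $0,\dots,k$ (with $B_k$ taken to be its partial value during phase $k$), the $w_i$ are distinct, and $\{w_0,\dots,w_k\}$ and $B_0\cup\dots\cup B_k$ are precisely the two classes of a bipartition of this tree. For the base case, Maker's first move lies in phase $0$; before it no edge is claimed and $V=\emptyset$, so part~(iii) cannot apply (it needs a vertex adjacent to $V\setminus\{w_0\}=\emptyset$) and she plays a part~(ii) move claiming some edge $w_0u$, after which the invariant plainly holds with $B_0=\{u\}$.

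For the inductive step, Breaker's move leaves Maker's graph unchanged, so I only examine Maker's next move. In a part~(ii) move she claims $w_ku$ with $u\in R=[n]\setminus V(G)$; since $w_k\in V(G)$ but $u\notin V(G)$, this adds $u$ as a new leaf, so the graph remains a tree, $u$ enters $B_k$ on the class opposite $w_k$, and the rest of the invariant clearly carries over. In a part~(iii) move she claims $uv$ with $u\in R$ and $v\in V(G)\setminus\{w_0,\dots,w_k\}$; the invariant identifies the latter set with $B_0\cup\dots\cup B_k$, so $u$ is again attached as a new leaf, the tree property persists, and setting $w_{k+1}:=u$ places $w_{k+1}$ on the class opposite $v$, i.e.\ with the $w$'s, as the phase advances to $k+1$ with $B_{k+1}=\emptyset$. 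Parts~(i) and~(iv) claim no edge (and~(i) never occurs), so there is nothing to verify there. Evaluating the invariant at the end of the game, when the current phase equals $t$, yields the statement.

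This argument is pure bookkeeping. The two points needing attention are that each of part~(ii) and part~(iii) always attaches a brand-new vertex --- one lying in $R$, hence outside the current $V(G)$ --- as a leaf, which is what simultaneously gives acyclicity, connectedness, and the correct side of the bipartition for the new vertex; and that at the moment part~(iii) is applied one has $V(G)\setminus\{w_0,\dots,w_k\}=B_0\cup\dots\cup B_k$, so that the new neighbour of $w_{k+1}$ lies on the $B$-side. I do not expect a genuine obstacle here; the only thing to be careful with is keeping the phase counter and the (possibly empty, possibly partial) set $B_k$ synchronised with the state of Maker's graph across the two types of move.
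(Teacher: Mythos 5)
Your proof is correct and follows essentially the same route as the paper's (which simply notes that Maker never closes an odd cycle because Breaker wins, never closes an even cycle because parts~(ii) and~(iii) always attach a fresh vertex from $R$, and that the bipartition holds ``by design''); your induction is just a more explicit bookkeeping of that same observation.
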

\begin{proof}\renewcommand{\qedsymbol}{$\blacksquare$}
	Since Breaker wins the game, Maker is never able to close an odd cycle.
Since she plays according to \cref{str-mb-oc-maker}, she also never closes an even cycle.
It follows that $G_M$ is a tree and by design $\{w_0, \ldots, w_t\}$ and $B_0 \cup \dots \cup B_t$ are the parts of its bipartition.
\end{proof}
		
\begin{obs}\label{obs:mb-oc-breaker}
	We have $(G_B, w_0, B_0, \ldots, w_t, B_t) \in \mathbb G_{n,b}$.
\end{obs}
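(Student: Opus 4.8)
The plan is to verify, one at a time, the four defining properties (a)--(d) of $\mathbb G_{n,b}$ from \cref{def:Gnq} for the tuple $(G_B, w_0, B_0, \ldots, w_t, B_t)$. The main input is \cref{obs:mb-oc-maker}, which says that $G_M$ is a tree with bipartition classes $W := \{w_0, \ldots, w_t\}$ and $B := B_0 \cup \cdots \cup B_t$, so that $V(G_M) = W \cup B$ and $R = [n] \setminus (W \cup B)$. One harmless convention is needed: if Maker forfeits in part (iv) of the strategy, we add to $G_B$ all edges still unclaimed at that moment (equivalently, $G_B$ is taken to be the complement of $G_M$ on the vertex set $[n]$); since \cref{def:Gnq} explicitly permits $G$ to carry more than the required edges, this does not affect membership in $\mathbb G_{n,b}$. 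Properties (a) and (d) are then immediate: the $w_i$ are distinct because $w_{k+1}$ is chosen in part (iii) of phase $k$ from $R = [n] \setminus V$ while $w_0, \ldots, w_k$ already lie in $V$ at that point; and every $v \in R$ lies outside $V(G_M)$, so none of its edges is in $G_M$, hence all of them are in $G_B$ and in particular $v$ is fully connected to $[n] \setminus R$.

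For property (c) one goes through the three families of required edges and shows that each is absent from $G_M$, hence present in $G_B$. Edges inside $W$ and edges inside $B$ join two vertices of the same bipartition class of the tree $G_M$, so they are not edges of $G_M$; an edge between $W$ and $R$ is absent from $G_M$ because $R \cap V(G_M) = \emptyset$; and an edge between some $w_i$ and a vertex of $B_j$ with $i < j$ is absent from $G_M$ because in the tree $G_M$ every vertex of $B_j$ has $w_j \neq w_i$ as its unique neighbour. This covers all the required edges.

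Property (b) takes slightly more care. By \cref{obs:mb-oc-maker} the class $B$ is disjoint from $W$, so $B_k \subseteq [n] \setminus W$; and since each vertex of $B_k$ joins $V$ precisely during phase $k$ (as an endpoint of an edge claimed by Maker in part (ii) of that phase), it lies in no other $B_j$ and equals no $w_j$, so the $B_k$ are pairwise disjoint. It remains to check that each $B_k$ is non-empty. For $k = 0$ this holds because Maker moves first and $w_0$ has $n-1$ free edges at it. For $k \geq 1$: write $\rho$ for the size of $R$ at the moment $w_k$ was chosen in part (iii) of phase $k-1$, so that at that moment $w_k$ has at most $\rho - b - 2$ Breaker-edges into $R$; after $w_k$ leaves $R$ and Breaker makes his single intervening move, at most $\rho - 2$ of the $\rho - 1$ edges from $w_k$ to the then-current $R$ are Breaker's and none is Maker's (Maker's only edge at $w_k$ so far leaves $R$), so an unclaimed such edge remains and part (ii) is applied at least once in phase $k$.

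The step I expect to be the main obstacle is exactly this last point: pinning down the off-by-one bookkeeping that makes the threshold $|R| - b - 2$ in part (iii) of \cref{str-mb-oc-maker} just enough to force $B_k \neq \emptyset$ --- one has to track carefully how $|R|$ decreases from one move to the next and how many edges Breaker may claim in between. The remaining properties are a routine transcription of \cref{obs:mb-oc-maker} and of the phase structure of \cref{str-mb-oc-maker} into the conditions of \cref{def:Gnq}.
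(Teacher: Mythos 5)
Your ``harmless convention'' of completing $G_B$ to the complement of $G_M$ is where the proof breaks down, and it is not a presentational issue. The observation is about Breaker's actual final graph because of how it is used immediately afterwards: the proof of \cref{thm-mb-oc-maker} deduces $b \geq f(G_B) = e(G_B)/N$ from the fact that Breaker claimed at most $Nb$ edges, where $N = e(G_M)$. If you inflate $G_B$ by all edges left unclaimed at the moment Maker forfeits, the bound $e(G_B) \leq Nb$ fails and the whole counting argument collapses. And the game really does end with many unclaimed edges: Maker forfeits via part (iv) of \cref{str-mb-oc-maker} (at the latest once $|R| \leq b+1$, since no vertex of $R$ can then satisfy the degree condition of part (iii)), so ``the edge is not in $G_M$'' does not imply ``the edge is in $G_B$''. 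This is precisely why condition (d) of \cref{def:Gnq} is a disjunction: a vertex $v\in R$ at the end of the game need not be fully joined to $B_0\cup\dots\cup B_t$ in Breaker's graph; what the failure of part (iii) actually gives is that each such $v$ either has no unclaimed edge to $B_0\cup\dots\cup B_t$ (hence, those edges all being Breaker's, is fully connected to $[n]\setminus R$ once (c) is known) \emph{or} has $\deg_{G_B}(v,R)\geq |R|-b-1$. Your write-up never uses the second alternative, which is a sign the intended argument has been bypassed rather than carried out.

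The same problem infects your verification of (c). The paper's argument there is game-theoretic, not complement-taking: edges inside $\{w_0,\dots,w_t\}$ or inside $B_0\cup\dots\cup B_t$ must belong to Breaker because an unclaimed such edge would let Maker close an odd cycle via part (i); and edges between $w_k$ and $R\cup B_{k+1}\cup\dots\cup B_t$ must belong to Breaker because the vertices of $B_{k+1},\dots,B_t$ were still in $R$ throughout phase $k$, so an unclaimed such edge would have kept Maker in part (ii) of phase $k$. You need these strategy-based arguments; ``absent from the tree $G_M$'' is not enough. Your treatment of (a), (b) and the non-emptiness of the $B_k$ (including the $|R|-b-2$ bookkeeping) matches the paper and is fine, but as it stands the proposal does not establish the observation in the form in which it is needed.
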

\begin{proof}\renewcommand{\qedsymbol}{$\blacksquare$}
	Let us check that the requirements of \cref{def:Gnq} apply to $G_B$.
	(a) and (b) follow immediately from \cref{str-mb-oc-maker}.
	Note that the $B_i$ are non-empty since for every $1 \leq k \leq t$ the vertex $w_k$, when being chosen at the end of phase $k-1$, is adjacent to at least $b+1$ unclaimed edges going to $R$, so Maker can follow part (ii) in phase $k$ of her strategy even after Breaker has claimed his $b$ edges.
	
	Continuing onto (c), we note that Breaker must have claimed all edges inside $\{w_0, \ldots,w_t\}$ and inside $B_0 \cup \dots \cup B_s$, since otherwise Maker would have closed an odd cycle and won the game.
	Furthermore, for each $0 \leq k \leq t$, Breaker must have claimed all edges between $w_k$ and $R \cup B_{k+1} \cup \dots \cup B_t$ because otherwise Maker would have kept playing in phase $k$ for at least one more round.
	
	Finally, in order to verify (d), we will show that Maker always forfeits the game, that is the game never ends naturally.
	If Maker has not already forfeited during a previous part of the game, she is clearly forced to do so when $|R| \leq b+1$ since no vertex can have negative degree.
	With this observation, the desired property follows immediately from Maker's strategy.
\end{proof}
		 
Define the parameter
\begin{equation}
	f: \mathbb G_{n,b} \to \R, \quad (G, v_0, A_0, \ldots, v_s, A_s) \mapsto \frac{e(G)}{|A_0| + \dots + |A_s| + s}
\end{equation}
and let
\begin{equation}
	m = \min \big\{ f(G) : ( G, v_0, A_0, \ldots, v_s, A_s ) \in \mathbb G_{n,b} \big\}.
\end{equation}
Note that the minimum is attained since $\mathbb G_{n,b}$ is a finite set.

By \cref{obs:mb-oc-maker}, $G_M$ is a tree and therefore has $N = |B_0| + \dots + |B_t| + t$ edges.
It follows that Breaker claimed at most $N b$ edges during the game and hence 
\begin{equation}
	b \geq f(G_B, w_0, B_0, \ldots, w_t, B_t) \geq m.
\end{equation}

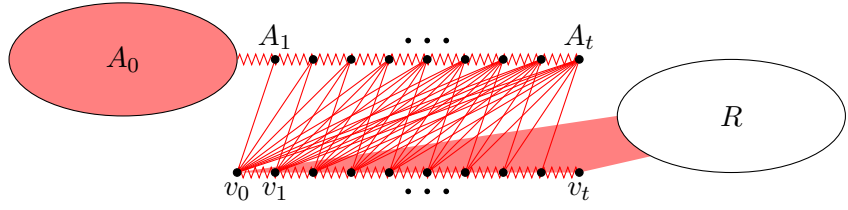
\begin{figure}
	\centering
	\begin{tikzpicture}
	\def \xrad {1.5cm}
	\def \yrad {0.5*\xrad}
	\def \h {2*\yrad}
	\def \r {0.05cm}
	\def \dist {0.5cm}
	
	\coordinate (P1) at (\xrad + 1 * \dist,\h);
	\coordinate (P2) at (\xrad + 2 * \dist,\h);
	\coordinate (P3) at (\xrad + 3 * \dist,\h);
	\coordinate (P4) at (\xrad + 4 * \dist,\h);
	\coordinate (P5) at (\xrad + 5 * \dist,\h);
	\coordinate (P6) at (\xrad + 6 * \dist,\h);
	\coordinate (P7) at (\xrad + 7 * \dist,\h);
	\coordinate (P8) at (\xrad + 8 * \dist,\h);
	\coordinate (P9) at (\xrad + 9 * \dist,\h);
	\coordinate (v0) at (\xrad,0);
	\coordinate (v1) at (\xrad + 1 * \dist,0);
	\coordinate (v2) at (\xrad + 2 * \dist,0);
	\coordinate (v3) at (\xrad + 3 * \dist,0);
	\coordinate (v4) at (\xrad + 4 * \dist,0);
	\coordinate (v5) at (\xrad + 5 * \dist,0);
	\coordinate (v6) at (\xrad + 6 * \dist,0);
	\coordinate (v7) at (\xrad + 7 * \dist,0);
	\coordinate (v8) at (\xrad + 8 * \dist,0);
	\coordinate (v9) at (\xrad + 9 * \dist,0);

	\node[draw=black,fill=red!50, ellipse, minimum width = 2 * \xrad, minimum height = 2 * \yrad] (A) at (0,\h) {$A_0$};
	\node[draw, fill=white, ellipse, minimum width = 2 * \xrad, minimum height = 2 * \yrad] (R) at (2*\xrad + 10 * \dist,\h/2) {$R$};

	\draw[red, decorate,decoration={zigzag, segment length = 3, amplitude = 2}] (A) -- (P1) -- (P2) -- (P3) -- (P4) -- (P5) -- (P6) -- (P7) -- (P8) -- (P9);
	\draw[red, decorate,decoration={zigzag, segment length = 3, amplitude = 2}] (v0) -- (v1) -- (v2) -- (v3) -- (v4) -- (v5) -- (v6) -- (v7) -- (v8) -- (v9);
	
	
	\begin{pgfonlayer}{background}
	\fill[red!50] (v0) -- (v9) -- (R.south west) -- (R.west) -- cycle;
	\end{pgfonlayer}
	
	\foreach \i in {0,1,...,8} {
		\pgfmathsetmacro\k{\i + 1}
		\foreach \j in {\k,...,9} 
		\draw[red] (v\i) -- (P\j);
	}
	
	

	
	\foreach \x in {(P1), (P2), (P3), (P4), (P5), (P6), (P7), (P8), (P9)}  
	\draw [fill = black] \x circle [radius = \r];
	
	\foreach \x in {(v0),(v1), (v2), (v3), (v4), (v5), (v6), (v7), (v8), (v9)}  
	\draw [fill = black] \x circle [radius = \r];
	
	\draw (v0) node[below]{$v_0$};
	\draw (v1) node[below]{$v_1$};
	\draw (v9) node[below]{$v_t$};
	\foreach \x in {-1,0,1}{
		\node[fill=black, circle, inner sep =0cm, outer sep =0cm, minimum size= 0.075cm] at (\xrad + 5 * \dist + \x * \dist/2 ,-0.25cm) {};
		\node[fill=black, circle, inner sep =0cm, outer sep =0cm, minimum size= 0.075cm] at (\xrad + 5 * \dist + \x * \dist/2 ,\h+0.25cm) {};
	}
	\draw (P1) node[above]{$A_1$};
	\draw (P9) node[above]{$A_t$};
	\end{tikzpicture}
	\caption{A graph $G \in \mathbb G_{n,b}$ that satisfies \cref{claim:Aj1} with all required edges in red.}
	\label{fig-mb-oc-maker}
\end{figure}

We have therefore turned the problem into a minimisation over $\mathbb G_{n,b}$. Fix some $(G,v_0,A_0, \ldots, v_s,A_s) \in \mathbb G_{n,b}$ which minimises $f$. In order to simplify this minimisation problem, we will make some observations about the structure of $G$. 

\begin{claim}\label{claim:Aj1}
	We have $|A_1| = |A_2| = \ldots = |A_s| = 1$.
%
%
\end{claim}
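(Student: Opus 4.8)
The plan is to argue that if some $A_j$ with $j \ge 1$ has size at least $2$, then we can modify the tuple to obtain another element of $\mathbb G_{n,b}$ with strictly smaller (or no larger) value of $f$, contradicting minimality — or at least showing that we may assume $|A_j|=1$ without loss. The intuition is that the vertices of $A_j$ with $j\ge 1$ are "expensive": each one forces Breaker's graph $G$ to contain many required edges (all edges inside $A_0 \cup \dots \cup A_s$, all edges between $v_i$ and $A_j$ for $i<j$, and all edges between $\{v_0,\dots,v_s\}$ and $R$ once we move a vertex out of $A_j$), while contributing only $1$ to the denominator $|A_0|+\dots+|A_s|+s$. So shrinking these sets should only help.

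First I would set up the modification: take a vertex $a \in A_j$ for some $j \ge 1$ with $|A_j| \ge 2$, and consider removing $a$ from $A_j$ and instead placing it into $R$ (i.e. into the "leftover" set). One must check the four defining properties of $\mathbb G_{n,b}$ still hold for the new tuple $(G', v_0, A_0, \dots, v_s, A_s')$ where $A_j' = A_j \setminus \{a\}$ and $G'$ is obtained from $G$ by deleting the now-unnecessary required edges incident to $a$ (keeping $G' \subseteq G$). Properties (a) and (b) are immediate since $A_j'$ is still non-empty (as $|A_j|\ge 2$) and disjointness is preserved. For (c), the required edges of the new tuple are a subset of those of the old one except that now $a \in R'$ demands edges between $\{v_0,\dots,v_s\}$ and $a$; but those edges were already required in $G$ (they are edges between $v_i$ and $A_j$ for $i < j$, plus — for $i \ge j$ — I should check whether edges between $v_i$ and $A_j$ for $i \ge j$ are forced; if not, this is where care is needed, see below). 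For (d), the condition on vertices of $R$: the old vertices of $R$ are unaffected, and for the new vertex $a$ we need it to be either fully connected to $[n]\setminus R'$ in $G'$ or to satisfy $\deg(a, R') \ge |R'| - b - 1$; since $a$ was in $A_j$, in $G$ it was fully connected to everything relevant, so keeping enough of those edges in $G'$ handles this.

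Then I would compare $f$ values: the denominator drops by exactly $1$ (we removed one element from $A_j$, $s$ unchanged), and the numerator $e(G')$ drops by the number of required edges we deleted, which is at least the number of edges forced to be incident to $a$ within the old structure. The claim will follow provided $e(G) - e(G') \ge m = f(G)$, i.e. provided the "cost" of $a$ in edges is at least the current minimum ratio — a counting estimate using that $a$ is joined to all of $A_0\cup\dots\cup A_s$ minus itself and to $\{v_0,\dots,v_{j-1}\}$, and comparing with $m \le b < n/2$. Alternatively, and perhaps more cleanly, I would iterate the reduction: repeatedly move vertices out of each $A_j$ ($j\ge 1$) until it has size $1$, and show the ratio is non-increasing throughout; since we started at a minimiser, equality must hold, so a minimiser with all $|A_j|=1$ for $j \ge 1$ exists, and (after possibly re-selecting the minimiser) we may assume our fixed $G$ has this form.

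\textbf{Main obstacle.} The delicate point is bookkeeping which edges are \emph{required} versus merely \emph{allowed} in $G$, and making sure that when $a$ is reclassified from $A_j$ to $R$, every edge that becomes newly required was already present. In particular I need to handle edges between $a$ and $v_i$ for $i \ge j$, and edges between $a$ and the vertices $A_i$ for $i > j$: in the old tuple the edge $v_i a$ with $i \ge j$ is not among the listed required edges, yet once $a \in R$ the edge $v_i a$ becomes required by the second bullet of (c). The resolution should be that such edges are in fact forced to lie in $G$ anyway — otherwise Maker could have exploited them, which is exactly the kind of reasoning used in \cref{obs:mb-oc-breaker} — or else one argues these edges can simply be added to $G$ at no cost to $f$ because adding edges only increases $e(G)$ and we are taking a minimum, so a true minimiser cannot contain superfluous edges. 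Once this classification is pinned down, the inequality $e(G)-e(G') \ge (\text{denominator decrease})\cdot m$ is a short computation, and the claim drops out.
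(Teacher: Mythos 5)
There is a genuine gap, and it comes from your choice of where to send the vertex $a$. You move $a$ from $A_j$ into $R$, and two consequences of this are treated as harmless when they are not. First, enlarging $R$ to $R'=R\cup\{a\}$ strengthens condition~(d) of \cref{def:Gnq} for every \emph{old} vertex of $R$ that satisfies only the degree alternative: the threshold becomes $|R'|-b-1=|R|-b$, one larger than before, so new edges inside $R'$ may have to be added just to keep the tuple in $\mathbb G_{n,b}$; your assertion that the old vertices of $R$ are unaffected is false. Second, once $a\in R'$, the second bullet of~(c) forces all $s+1$ edges between $a$ and $\{v_0,\dots,v_s\}$ (previously only the $j$ edges to $v_0,\dots,v_{j-1}$ were forced), and condition~(d) must additionally be arranged for $a$ itself; your claim that $a$ ``was fully connected to everything relevant'' is wrong, since no edges between $A_j$ and $R$, nor between $A_j$ and $v_i$ for $i\ge j$, are required by \cref{def:Gnq}. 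So the modification trades the loss of the $|A_0|+\dots+|A_s|-1+j$ edges required at $a$ against a gain of at least $s+1-j$ edges plus the cost of repairing~(d), and the inequality $e(G)-e(G')\ge f(G)$ that you need (so that the ratio drops when the denominator drops by one) is never verified. It need not hold: \cref{claim:A0Rs} is proved only \emph{after} this claim and cannot be invoked here, so $|A_0|$ could a priori be as small as $1$, in which case your modification can increase $f$.

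The paper sidesteps all of this by moving $u\in A_j$ into $A_0$ rather than into $R$. Then $R$ and $A_0\cup\dots\cup A_s$ are both unchanged, so condition~(d) and the first two bullets of~(c) are untouched and the denominator of $f$ is unchanged; the only effect is that the $j\ge 1$ edges $v_0u,\dots,v_{j-1}u$ are no longer required and can be deleted, giving $f(\tilde G)<f(G)$ immediately. If you replace ``place $a$ into $R$'' by ``place $a$ into $A_0$'', your argument collapses to this two-line computation and all of the bookkeeping you flag as the main obstacle disappears.
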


\begin{proof}\renewcommand{\qedsymbol}{$\blacksquare$}
Assume otherwise that $|A_j|>1 $ for some $ 1 \leq j \leq s$.
By moving one vertex $ u \in A_j $ to $ A_0 $ and deleting the edges connecting $u$ and $\{v_0, \ldots, v_{j-1}\}$, we would obtain a new graph $\tilde G$ and new sets $\tilde A_0, \ldots, \tilde A_s$ satisfying $(\tilde G, v_0, \tilde A_0, \ldots, v_s, \tilde A_s) \in \mathbb G_{n,b}$ and $f(\tilde G) < f(G)$, contradicting our choice of $G$.
\end{proof}

Using \cref{claim:Aj1}, we can lower bound the number of edges in $G$ by  
\begin{align}\label{eq:e(G)}
	e(G) & \geq \binom{|A_0|+s}{2} + \binom{s+1}{2} + (s+1)|R| + \sum_{i=1}^s i + e \Big(R, \bigcup_{i=0}^{s}A_i \Big) + e(R) \nonumber \\
	&= \left( \card{A_0}^2 +3s^2 + 2\card{A_0}s + 2 \card R s \right)/2 + e \Big(R, \bigcup_{i=0}^{s} A_i \Big) + e(R) + o(n^2).
\end{align}
Note that this holds even if $s=0$.

Let $R_1$ denote the vertices in $R$ that are fully connected to $[n] \setminus R$ and let $R_2 = R \setminus R_1$ denote those that are not. Note that every vertex in $R_2$ satisfies the degree condition in $R$, that is $ \deg(v,R) \geq |R|-b-1 $ for $v \in R_2$.

\begin{claim}\label{claim:e(R)}
We have 
\[e(R,A_0 \cup \dots \cup A_s) = e(R_1,A_0 \cup \dots \cup A_s) = |R_1| \, (|A_0| + s)\]
and
\[e(R) = e(R_2) = \max \big( 0, \lceil |R_2| \, (|R|-b-1)/2 \rceil \big).\]
Consequently, we may assume that
\begin{equation*}
	R = \left\{\begin{array}{ll}
        R_1 & \text{if } |R| > 2|A_0| +2s + b, \\
        R_2 & \text{otherwise. }
        \end{array}\right.
\end{equation*}
\end{claim}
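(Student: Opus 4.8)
The plan is to use that $G$ minimises $f$ while the denominator $|A_0| + \dots + |A_s| + s$ depends only on $|A_0|$ and $s$ — not on the edge set, nor on how $R$ splits into $R_1$ and $R_2$ — so among all tuples with the same $v_0,\dots,v_s,A_0,\dots,A_s$ we may assume $G$ has as few edges as possible. Then $G$ consists of exactly the edges forced by \cref{def:Gnq}(c) together with a minimal family of extra edges needed to satisfy (d), and since (c) already forces the complete bipartite graph between $\{v_0,\dots,v_s\}$ and $R$, all of these extra edges run either from $R$ to $A_0\cup\dots\cup A_s$ or inside $R$. For $v\in R_1$, being fully connected to $[n]\setminus R$ forces precisely the $|A_0|+s$ edges from $v$ to $A_0\cup\dots\cup A_s$ (using $|A_1|=\dots=|A_s|=1$ from \cref{claim:Aj1}), whereas for $v\in R_2$ condition~(d) only asks that $\deg(v,R)\ge|R|-b-1$, which does not mention the $A_i$ at all; hence by minimality no vertex of $R_2$ is joined to $A_0\cup\dots\cup A_s$, and therefore $e(R,A_0\cup\dots\cup A_s)=e(R_1,A_0\cup\dots\cup A_s)=|R_1|(|A_0|+s)$.

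Turning to the edges inside $R$, vertices of $R_1$ need none, so $e(R)$ is the minimum number of edges of a graph on $R$ in which every vertex of $R_2$ has degree at least $D:=|R|-b-1$. If $D\le 0$ this is $0$; otherwise, summing degrees over $R_2$ gives $2e(R)\ge|R_2|D$, so $e(R)\ge\max\!\big(0,\lceil|R_2|D/2\rceil\big)$, and this bound is attained as soon as $R_2$ is large enough to carry the required degrees internally, i.e.\ $|R_2|\ge D+1$, by making the graph induced on $R_2$ as close to $D$-regular as possible (a near-$D$-regular graph on $r\ge D+1$ vertices with $\lceil rD/2\rceil$ edges always exists). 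Once we know $R$ is pure this gives the stated value of $e(R)=e(R_2)$.

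For the ``consequently'', I would use that the partition $R=R_1\sqcup R_2$ is itself at our disposal: any partition is realised by some tuple in $\mathbb G_{n,b}$ with the same denominator. The number of edges incident to $R$ beyond the forced ones is $|R_1|(|A_0|+s)+e(R)\ge|R_1|(|A_0|+s)+\max(0,|R_2|D/2)$, which, writing $|R_2|=|R|-|R_1|$, is an affine function of $|R_1|$ and hence minimised at $|R_1|=0$ or $|R_1|=|R|$; both of these extremes are attainable (for $R_1=R$ take no edge inside $R$; for $R_2=R$ take a near-$D$-regular graph on $R$, legitimate since $|R|\ge D+1$ as $b\ge0$). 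Thus we may assume $R=R_1$ or $R=R_2$, and comparing the two costs $|R|(|A_0|+s)$ and $\lceil|R|D/2\rceil$ shows that the first is the smaller exactly when $2(|A_0|+s)\le|R|-b-1$, i.e.\ when $|R|>2|A_0|+2s+b$; this is precisely the asserted dichotomy, and in either case $e(R)=e(R_2)=\max(0,\lceil|R_2|(|R|-b-1)/2\rceil)$.

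The part I expect to require the most care is the extremal argument of the last paragraph: one must check that the affine lower bound on $e(R)$ is genuinely \emph{attained} at the two pure configurations — which is exactly where the elementary existence of near-regular graphs enters — and that each modification used along the way (deleting superfluous $R$–$A_i$ edges, reassigning vertices between $R_1$ and $R_2$, replacing the graph induced on $R$) leaves the tuple in $\mathbb G_{n,b}$ and does not change the denominator of $f$, so that minimality of $f(G)$ is preserved at every step.
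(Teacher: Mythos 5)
Your argument is correct and follows essentially the same route as the paper's: minimality of $f$ plus the definition of $R_1$ pins down the edges from $R$ to $A_0\cup\dots\cup A_s$, a degree-sum bound together with the existence of near-regular graphs (the paper cites Erd\H{o}s--Gallai for this) gives $e(R_2)$, and a comparison of the costs of the two pure configurations yields the dichotomy. Your affine/endpoint argument and your explicit handling of the case $0<|R_2|<D+1$ are more careful than the paper's ``it is now easy to see'', but the underlying approach is identical.
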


\begin{proof}\renewcommand{\qedsymbol}{$\blacksquare$}
  The first equation follows immediately from the definition of $R_1$ and the minimality of $G$.
  Note that the Erd\H os--Gallai theorem implies that for every $m > d$ there exists an $m$-vertex graph satisfying $\deg(v) = d$ for all but at most one vertex $v$ and $deg(v) \in \{d, d + 1\}$ for that remaining vertex. The second equation now follows from the definition of $R_2$ and the minimality of $G$ as well. It is now easy to see from these equations that $f(G)$ is minimised when $R=R_2$ if $|R| \leq 2|A_0| +2s + b$ and when $R=R_1$ otherwise.
\end{proof}

Furthermore, we have the following relation between $R_2$ and $A_0$.

\begin{claim}\label{claim:A0Rs}
  If $s \geq 1$ then $|R_2| + s \leq |A_0| \leq |R_2|+s+3$.
\end{claim}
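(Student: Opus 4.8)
The plan is to derive all three inequalities from local surgeries on the chosen $f$-minimiser $(G,v_0,A_0,\ldots,v_s,A_s)$, using repeatedly that any tuple $\tilde G$ produced by such a surgery that still lies in $\mathbb{G}_{n,b}$ must satisfy $f(\tilde G)\ge m=f(G)$. By \cref{claim:Aj1} the denominator of $f$ equals $|A_0|+2s$, and each surgery below changes it by at most one in a way that is clear by inspection, so $f(\tilde G)\ge m$ turns into an elementary inequality on $e(\tilde G)-e(G)$; that difference I would compute by bookkeeping which of the required edges of \cref{def:Gnq} are created or destroyed. Two features keep this bookkeeping clean: no $v_i$ carries a required edge to $A_0$, and $v_s$ carries no required edge to any $A_j$.

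First I would reduce to the case $R=R_2$ whenever $s\ge1$. If instead $R=R_1$, then \cref{claim:e(R)} gives $|R|>2|A_0|+2s+b$. Merge the unique vertex of $A_s$ into $A_0$ and relocate $v_s$ into $R$ as a vertex fully joined to $[n]\setminus R$, dropping the index $s$ by one. One checks the new tuple lies in $\mathbb{G}_{n,b}$, that $e(\tilde G)-e(G)=|A_0|-|R_1|$, and that the denominator drops by one; minimality then forces $|R_1|\le|A_0|+m\le|A_0|+b$, which contradicts $|R_1|=|R|>2|A_0|+2s+b$. Hence the minimiser has $R=R_2$, and in particular $|R|=|R_2|$.

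For the lower bound I would absorb \emph{both} $v_s$ and the vertex of $A_s$ into $A_0$, dropping the index by one. Since $R$ is left untouched, the Erd\H os--Gallai count of $e(R_2)$ is frozen, and property (d) survives even after deleting the now-superfluous $v_s$--$R$ edges; the denominator stays $|A_0|+2s$ and $e(\tilde G)-e(G)=|A_0|-s-|R_2|$, so $f(\tilde G)\ge m$ forces $|A_0|\ge|R_2|+s$. For the upper bound, assuming $|A_0|\ge3$ (otherwise it is immediate since $s\ge1$), I would run the reverse move: pick two vertices $x,y\in A_0$, promote $x$ to a new $v_{s+1}$, and set $A_{s+1}=\{y\}$. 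Again $R$ and the denominator are unchanged, and $e(\tilde G)-e(G)=-|A_0|+s+|R_2|+3$ --- the constant $3$ here being exactly what produces the ``$+3$'' in the claim --- so $f(\tilde G)\ge m$ forces $|A_0|\le|R_2|+s+3$.

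The part I expect to be the main obstacle is not any single piece of arithmetic but the routine-yet-fiddly check, for each surgery, that $\tilde G$ really belongs to $\mathbb{G}_{n,b}$: that (a)--(b) still hold, that $\tilde G$ still contains \emph{all} the edges mandated by (c) after the additions and deletions, and --- the genuinely delicate point --- that (d) is preserved, i.e.\ that every vertex still in $R$ remains either fully joined to $[n]\setminus R$ or of internal degree at least $|R|-b-1$. Because the three $R=R_2$ surgeries never touch $R$ or its internal edges, and the $R=R_1$ surgery explicitly makes the relocated vertex fully connected, these checks all go through, but they are where attentiveness is needed.
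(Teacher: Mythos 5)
Your two core surgeries are exactly the paper's: for the lower bound you absorb $v_s$ and the unique vertex of $A_s$ into $A_0$ (removing $|R_2|+2s$ required edges and adding $|A_0|+s$), and for the upper bound you extract a new $v_{s+1}$ and $A_{s+1}$ from $A_0$; your edge balances $|A_0|-s-|R_2|$ and $-|A_0|+s+|R_2|+3$ and the observation that the denominator of $f$ is unchanged both match the paper. The one place you diverge is the preliminary reduction to $R=R_2$. The paper does not need it: in the case $R=R_1$ the edges between the relocated vertex and $R_1$ are present on both sides of each surgery anyway, because property~(d) of \cref{def:Gnq} forces every vertex of $R_1$ to be joined to all of $[n]\setminus R$ whether its neighbour sits among the $v_i$ or inside $A_0$; hence those edges cancel in the count and only the $|R_2|$ edges to $R_2$ survive, which is exactly why the claim is phrased with $|R_2|$ rather than $|R|$. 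Your reduction (ruling out $R=R_1$ with $s\ge 1$ by pushing $v_s$ into $R$) is valid, but it costs an extra surgery and, more importantly, imports the game-theoretic inequality $m\le b$ into what is otherwise a purely extremal statement about the minimiser over $\mathbb G_{n,b}$; within the proof of \cref{thm-mb-oc-maker} that inequality is available, so nothing breaks, but the paper's version of the argument is self-contained. Either way your bookkeeping is correct and the proof goes through.
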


\begin{proof}\renewcommand{\qedsymbol}{$\blacksquare$}
  To see that the left-hand side of the inequality holds, assume to the contrary that $|A_0| < |R_2|+s $.
Moving $v_s$ and the vertex $u \in A_s$ (of which there is only one by \cref{claim:Aj1}) to $A_0$, removing the $2s$ edges between $\{u,v_s\}$ and $ \{v_0, \ldots, v_{s-1}\}$ as well as the $|R_2|$ edges between $v_s$ and $R_2$ and adding the $|A_0|+s$ edges between $v_s$ and $A_0 \cup \dots \cup A_s$, we obtain a new graph $\tilde{G}$ with new sets $\tilde A_0, \ldots, \tilde A_{s-1}$.
Note that $(\tilde G, v_0, \tilde A_0, \ldots, v_{s-1}, \tilde A_{s-1}) \in \mathbb G_{n,b}$ and that, since we removed $|R_2| + 2s$ edges and added $|A_0|+s$ edges and by assumption $|R_2| + 2s > |A_0| + s$, we have $f(\tilde G) < f(G)$, contradicting our choice of $G$.
If $|A_0| > |R_2| + s + 3$, then we repeat this argument by removing two vertices $v, u$ from $A_0$ and setting them to be $v_{s+1} = v$ and $A_{s+1} = \{u\}$.
It is easy to verify that this leads to a similar contradiction, proving the claim.
\end{proof}

%
%
%
%

Define
\begin{equation} \label{eq:params}
	\beta = b/n, \quad \alpha = |A_0| / n, \quad \rho = |R| / n \quad \text{and} \quad \sigma = s/n
\end{equation}
and note that $|A_0| + 2s + 1 + |R| = n$, that is 
\begin{equation} \label{eq:sum1}
	\alpha + 2\sigma + \rho + o(1) = 1.
\end{equation}
Finally, we will solve the optimisation problem through a case distinction.

\medskip \noindent \textbf{Case~1.} If $|R| \leq 2|A_0| + 2s + b$, then by \cref{claim:e(R)} we must have $R = R_2$ and $e(R,A_0 \cup \dots \cup A_s) = 0$.
%
%
We note that, if $s \geq 1$, dividing the relation given in \cref{claim:A0Rs} by $n$ gives us
\begin{equation} \label{eq:A0Rs/n}
	\alpha = \rho + \sigma + o(1).
\end{equation}

\medskip \noindent \emph{Case~1.1.} If $|R| \leq b+1$ then we have $e(R) = 0$ by \cref{claim:e(R)}.
Plugging the parameters into \cref{eq:e(G)} therefore gives
\begin{align*}
	\beta \geq \frac{f(G)}{n} & = \frac{e(G)/n^2}{(|A_0| + 2s)/n} \geq \frac{\alpha^2 +3\sigma^2 + 2\alpha\sigma + 2 \rho\sigma}{2 (\alpha + 2\sigma)} + o(1).
\end{align*}

If $s=0$ (and therefore $\sigma = 0$), we get
  \[ \beta \geq \frac{\alpha^2}{2\alpha} = \frac \alpha 2 = \frac{1-\rho}{2} \geq \frac{1 - \beta}{2}\]
and thus $\beta \geq 1/3$.

If $s \geq 1$ then using both \cref{eq:sum1} and \cref{eq:A0Rs/n} we get
\begin{align*}
	\beta \geq \frac{2-2\rho-\rho^2}{6 (1-\rho)} + o(1).
\end{align*}
By assumption we also know that $\beta \geq \rho + o(1)$, so that after minimising the maximum of $\rho$ and $(2-\rho^2-2\rho)/(6 (1-\rho))$ over $0 \leq \rho < 1$, we get that $\beta \geq (4-\sqrt{6})/5 + o(1)$.

\medskip \noindent \emph{Case~1.2.} If $|R| > b+1$, then $e(R) \geq |R| \left(|R|-b-1\right)/2$ by \cref{claim:e(R)}.
Again plugging the parameters into \cref{eq:e(G)} therefore gives
\begin{align}\label{eq:case12}
  \beta \geq \frac{\alpha^2 +3\sigma^2 + 2\alpha\sigma + 2 \rho\sigma + \rho(\rho - \beta)}{2 (\alpha + 2\sigma)} + o(1).
\end{align}

If $s=0$, we have $\alpha = 1 - \rho + o(1)$ by \cref{eq:sum1} and hence \cref{eq:case12} becomes
\[ \beta \geq \frac{\alpha^2 + \rho (\rho - \beta)}{2 \alpha} = \frac{1 + 2 \rho^2 - 2 \rho - \rho \beta}{2-2\rho}.\]
Solving for $\beta$ we get get $ \beta \geq (1-2\rho+2\rho^2)/(2-\rho)$, which implies $\beta \geq 2 - \sqrt{5/2}$.

If $s \geq 1$, combining \cref{eq:sum1,eq:A0Rs/n} implies
\begin{align*}
	\beta \geq \frac{2-2\rho+2\rho^2}{6 - 3\rho} + o(1).
\end{align*}
By assumption we also know that $\beta \leq \rho + o(1)$.
We can verify that $(2-2\rho+2\rho^2)/(6 - 3\rho) > \rho$ when $(4 - \sqrt{6})/5 < \rho \leq 1$, so that after minimising $(2-2\rho-2\rho^2)/(6 - 3\rho))$ over $0 \leq \rho \leq (4 - \sqrt{6})/5$, we again obtain that $\beta \geq (4-\sqrt{6})/5 + o(1)$.

\medskip \noindent \textbf{Case~2.} If $|R| > 2|A_0| + 2s + b$, then by \cref{claim:e(R)} we may assume that $R = R_1$.
Using that $R = R_2$ in \cref{eq:e(G)} gives us 
\begin{equation}\label{eq:case2}
	\beta \geq \frac{\alpha^2 +3\sigma^2 + 2\alpha\sigma + 2 \rho (1 - \rho) }{2 (\alpha + 2\sigma)} + o(1).
\end{equation}
If $s=0$, we again have $\alpha = 1 - \rho + o(1)$ by \cref{eq:sum1} and hence \cref{eq:case2} becomes
\begin{align*}
  \beta &\geq \frac{\alpha^2 + 2 \rho (1 - \rho) }{2 \alpha} + o(1) = \frac{(1-\rho)^2 +2\rho (1-\rho)}{2(1-\rho)} = \frac{1+\rho}{2} \geq 1/2.
\end{align*}

If $s \geq 1$ then again dividing the relation given in \cref{claim:A0Rs} by $n$ gives us
\begin{equation} \label{eq:A0s/n}
	\alpha = \sigma + o(1).
\end{equation}
Combining \cref{eq:case2,eq:A0s/n,eq:sum1} implies that $\beta \geq 1 - 2\alpha + o(1)$.
However, the case assumption combined with \cref{eq:A0s/n,eq:sum1} also gives us that $\alpha \leq (1 - \beta) / 7 + o(1)$.
Combining these two results, we get that $\beta = 1 + o(1)$.

\medskip

All cases resulted in $\beta \geq (4-\sqrt{6})/5 + o(1)$ so that $b \geq \big( (4-\sqrt{6})/5 + o(1) \big) \, n$.
\end{proof}

\section{A strategy for Breaker\texorpdfstring{ -- Proof of \cref{thm-mb-oc-breaker}}{}} \label{sec-mb-oc-breaker}

In this section we prove \cref{thm-mb-oc-breaker} by presenting a strategy for Breaker in the connected Maker-Breaker odd cycle game.
Note that, as long as Maker has not yet won the game, her graph will always be bipartite.
Besides blocking any immediate threats of Maker creating an odd cycle, Breaker's goal will be to connect the vertices not yet touched by Maker in as even a way as possible to the two parts of Maker's graph.
This way Breaker attempts to minimise the number of his edges ending up between the two parts of Maker's graph, where they would not serve the purpose of blocking any odd cycle.

Let $ G_M(s) = (V_s,E_s) $ denote Maker's graph after her $s$--th turn and let $R_s = [n] \setminus V_s$ be the set of vertices not touched by Maker.
Since $ G_M(s) $ is bipartite and since Maker is forced to play connected, there is a unique (up to labelling) bipartition $ V_s = V_s^1 \cup V_s^2 $, which we may choose in such a way that $ V_s^i \subset V_{s+1}^{i} $ holds for all $ s \geq 0 $ and $ i \in \{1,2\}$.
Note again that our notion of Maker's and Breaker's graph do not include isolated vertices.
 
A \emph{state} is a tuple $(s,k)$ with $ s \geq 1$ and $0 \leq k \leq b$ and describes the situation of the game after the $s$--th move of Maker and after Breaker has claimed $k$ edges in his $s$--th turn.
For example, the state $(1,0)$ describes the situation right after Maker claimed the first edge of the game.
We let $\deg_k(v,V_s)$, $\deg_k(v,V_s^1)$ and $\deg_k(v,V_s^2)$ denote the number of neighbours of a vertex $v \in [n]$ in $V_s$, $V_s^1$ and $V_s^2$ in Breaker's graph at state $(s,k)$.
We similarly define other quantities such as $e_k(V_s)$, $e_k(R_s)$, $e_k(R_s,V_s)$, $e_k(R_s,V_s^1)$ and $e_k(R_s,V_s^2)$.
When we refer to these quantities at the end of round $s$, that is when $k=b$, we will omit the extra parameter and simply denote them by $\deg(v, V_s)$ and so forth.

Unless stated otherwise, we are always referring to Breaker's graph whenever talking about edges, degrees, etc. for the remainder of this section.
Using this notation, let us state Breaker's strategy.
	
\begin{str} \label{str-mb-oc-con-br}
	After the $s$--th move of Maker, Breaker does the following.
	Any ties in this strategy are always broken arbitrarily unless otherwise specified.
	\begin{enumerate}[(i)]
		\item He selects every unclaimed edge inside $V_s^1$ or inside $ V_s^2 $, killing all threats of Maker completing an odd cycle with her next move.
		If this is not possible, he forfeits.
		\item Assume Breaker has fulfilled part~(i) by claiming $0 \leq k_0 < b$ edges and is in his $k$--th move for $k_0 < k \leq b$.
		Also assume that $e_k(V_s^{i_1},R_s) \leq e_k(V_s^{i_2},R_s)$ where $\{i_1,i_2\} = \{1,2\}$.
		\begin{enumerate}[(a)]
			\item If $|R_s| \leq b$ and there are at least $1$ and at most $b-k+1$ unclaimed edges between $V_s^i$ and $R_s$ for some $i \in \{1,2\}$, then he claims an arbitrary edge between $V_s^i$ and $R_s$.
			\item Otherwise, if there are unclaimed edges between $R_s$ and $V_s^{i_1}$, he selects an edge that connects $V_s^{i_1}$ to some vertex $ v \in R_s $ minimising $\deg_k(v,V_s^{i_1})$.
        	If there are multiple options, he takes one for which $\deg_k(v,V_s^{i_2}) - \deg_k(v,V_s^{i_1})$ is maximised.
			\item Otherwise, if there are unclaimed edges between $R_s$ and $V_s^{i_2}$, he selects an edge that connects $V_s^{i_2}$ to some vertex $ v \in R_s $ minimising $\deg_k(v,V_s^{i_2})$.
			\item Otherwise he claims an arbitrary edge.
		\end{enumerate}
	\end{enumerate}
\end{str}

From now on we assume that Breaker is given a bias of
\begin{equation}
	b = \left\lceil \frac{n-\e n}{2} \right\rceil,
\end{equation}
where $\e = 0.06$. Let us furthermore assume that, despite Breaker following \cref{str-mb-oc-con-br} with the given bias, Maker wins the game in the $(t+2)$--nd round for some $t \geq 0$.
We begin with some easy facts.

\begin{obs} \label{Obs1}
	We may assume the following throughout the game.
	\begin{enumerate}[(i)] \setlength{\itemsep}{0pt}
		\item Maker's graph $G_M(s)$ is a tree for all $ 1 \leq s \leq t + 1$ and in particular $|V_s| = s+1$.
		\item Maker must create at least $b+1$ threats in round $t+1$.
		\item We have $t \leq n - 2$.
	\end{enumerate}
\end{obs}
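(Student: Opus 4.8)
The three parts all follow quickly from the connected rules together with the hypothesis that Maker's winning move falls in round $t+2$, and I would prove them in that order. Throughout, the ``we may assume'' clause lets us fix a winning strategy for Maker that never passes up an immediate win; in particular Breaker then never fails part~(i) of \cref{str-mb-oc-con-br} before round $t+1$, since otherwise an edge completing an odd cycle would survive and Maker would win strictly before round $t+2$.

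For part~(i), playing connected makes $G_M(s)$ connected after Maker's first move, so it suffices to rule out cycles. An odd cycle in $G_M(s)$ with $s\le t+1$ would mean Maker had already won before round $t+2$, which is impossible. To exclude even cycles, note that an edge Maker may legally claim at that point has an endpoint in $V_s$, so it either joins $V_s$ to $R_s$ (extending her tree), or joins two vertices of $V_s^1$ or two vertices of $V_s^2$ (claiming it closes an odd cycle and wins at once), or joins $V_s^1$ to $V_s^2$ (closing an even cycle). A move of the last kind neither wins the game nor adds a vertex to $V_s$, and, as explained in the last paragraph, may be assumed never to occur in a winning strategy. Hence we may assume $G_M(s)$ is a tree for all $1\le s\le t+1$; since it has $s$ edges, this forces $|V_s|=s+1$.

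For part~(ii), since Maker wins in round $t+2$ and never passes up a win, Breaker must fail part~(i) of \cref{str-mb-oc-con-br} in round $t+1$---otherwise no edge completing an odd cycle survives his $(t+1)$-st turn and Maker cannot win in the next round. Failing part~(i) means that after Maker's $(t+1)$-st move there are more than $b$ unclaimed edges lying inside $V_{t+1}^1$ or inside $V_{t+1}^2$. No move of Breaker can create such an edge, since the bipartition of $G_M$ depends only on $G_M$ and Breaker's edge set is disjoint from it, and none of them existed before Maker's $(t+1)$-st move because Breaker succeeded at part~(i) in round $t$ (using the first paragraph); so all of these (at least $b+1$) edges were created by Maker's move in round $t+1$.

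Part~(iii) is then immediate: by part~(i), $G_M(t+1)$ is a tree on $t+2$ vertices and a subgraph of $K_n$, so $t+2\le n$. The only point that needs care is the claim invoked in part~(i) that Maker may be assumed never to claim an edge joining $V_s^1$ to $V_s^2$; this is a strategy-stealing argument. Such an edge has its endpoints at odd distance in Maker's tree, so it can never lie on an odd cycle of her graph, and its endpoints are already connected; hence it is useless to Maker, and any move claiming it can be replaced by a move extending the tree to a fresh vertex---which is available whenever Maker neither wins immediately nor is already unable to move (in which case she has no winning continuation anyway). I expect this swap to be the only delicate step; everything else is bookkeeping.
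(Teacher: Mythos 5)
Parts (ii) and (iii) are correct and essentially the paper's argument (your count $t+2=|V_{t+1}|\le n$ for (iii) is in fact a touch more direct than the paper's). The problem sits exactly in the step you flag as the delicate one in part (i). Your justification for discarding a move on an edge $e$ joining $V_s^1$ to $V_s^2$ is that such an edge ``can never lie on an odd cycle of her graph'' because its endpoints are at odd distance in Maker's tree. That is false: once Maker later claims a winning edge inside one part, the resulting odd cycle may very well pass through $e$. For instance, if Maker's tree is the path $a$--$b$--$c$--$d$ and she has also claimed $ad$ (closing an even $4$-cycle), then claiming $ac$ closes the odd triangle $a$--$d$--$c$--$a$, which uses $ad$. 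So ``useless'' does not follow from the reason you give, and without it the replacement argument is unsupported: you have not ruled out that the only odd cycle Maker can ever complete goes through $e$.

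The missing idea is the paper's symmetric-difference argument. If $e$ closes the even cycle $C_1$ and the odd cycle $C_2$ that Maker completes in round $t+2$ contains $e$, then $(C_1\setminus C_2)\cup(C_2\setminus C_1)$ has every degree even and an odd number of edges (namely $|C_1|+|C_2|-2|C_1\cap C_2|$), hence contains an odd cycle, and this odd cycle avoids $e$. In the example above, $a$--$b$--$c$--$a$ is that other odd cycle. Thus the move on $e$ was indeed wasted and can be replaced by a tree-extending move, and with this substitution the rest of your argument for (i) goes through as written.
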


\begin{proof}
	Let us start by proving (i).
	Suppose that at some point Maker claims an edge $e$ which closes an even cycle $C_1$ in her graph.
	Let $C_2$ be the odd cycle that she closes in the $t+2$--nd round.
	If $e \in C_2 $, then $(C_2 \setminus C_1) \cup (C_1 \setminus C_2) $ is another odd cycle not containing $e$.
	It follows that we may assume that she chose any other edge instead of $e$ without decreasing her possibility of winning.
	If at some point there are only edges left which close an even cycle in Maker's graph, she will lose the game, contradicting our assumption.
	
	For (ii), note that the only way Breaker loses while following \cref{str-mb-oc-con-br} is when he cannot defend all threats and is forced to forfeit, implying that he must have faced $b+1$ threats that were newly created by Breaker in the $(t+1)$--st round.
	
	To see that (iii) holds, note that by (i) Maker's graph would be a spanning tree after round $n-1$, so that in every further round she cannot create any new threats.
	By (ii) it follows that $ t + 1 \leq n - 1$.
\end{proof}

The next lemma establishes that Breaker, when following \cref{str-mb-oc-con-br} but still losing, never claims any edge within $R_s$ or between $V_s^1$ and $V_s^2$ during round $s$.

\begin{obs}\label{Obs2}
	Breaker never executes parts (ii) (a) or (d) of \cref{str-mb-oc-con-br}.
\end{obs}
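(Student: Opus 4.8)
The statement claims that a losing Breaker never gets to execute the "fallback" options (ii)(a) or (ii)(d) of his strategy, meaning he is always able to extend the connection from $R_s$ to one of the two parts of Maker's tree without ever having to waste a move inside $R_s$ or between $V_s^1$ and $V_s^2$. The plan is to argue by contradiction: suppose that at some state $(s,k)$ with $k > k_0$ Breaker is forced into (ii)(a) or (ii)(d), and derive that Breaker actually wins the game, contradicting our standing assumption that Maker wins in round $t+2$.

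First I would dispose of (ii)(d). Reaching (ii)(d) means that at state $(s,k)$ there are no unclaimed edges between $R_s$ and $V_s^1 \cup V_s^2 = V_s$ at all, i.e.\ Breaker has already claimed every edge from $R_s$ to $V_s$ (those not claimed by Maker; but since Maker plays connected and $R_s$ is untouched by Maker, there are no Maker-edges from $R_s$ to $V_s$ except possibly the ones Maker just played that created $V_{s}$ from $V_{s-1}$). Combined with part~(i), which guarantees all edges inside $V_s^1$ and inside $V_s^2$ are Breaker's, this means that in Maker's graph no vertex of $R_s$ can ever be joined to $V_s$ keeping bipartition classes intact in a way that... more carefully: once every $R_s$--$V_s$ edge and every edge inside a part is claimed by Breaker, any edge Maker can still play either lies inside $R_s$ or joins $V_s^1$ to $V_s^2$; the latter closes an odd cycle immediately if it is present, but it is not (Breaker would have lost earlier). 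So Maker can only grow into $R_s$, but then her new vertex $w$ has all its back-edges to $V_s$ owned by Breaker, so $w$ contributes no new odd-cycle threat through $V_s$; iterating, Maker's component among $R_s \cup V_s$ can never close an odd cycle. This shows Breaker has in fact already won, contradiction. The key point to nail down here is bookkeeping: that "all $R_s$--$V_s$ edges claimed" together with part~(i) really does kill all future threats, using \cref{Obs1}(i) (Maker's graph stays a tree until round $t+1$) and the fact that any odd cycle must use some edge inside a part or some $V_s^1$--$V_s^2$ edge or close up within $R_s$ against edges that are all Breaker's.

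For (ii)(a): reaching it requires $|R_s| \le b$ and the existence of some $i$ with between $1$ and $b-k+1$ unclaimed $V_s^i$--$R_s$ edges. I would show this situation cannot arise before round $t+1$ without Breaker winning, and that in round $t+1$ it contradicts \cref{Obs1}(ii) (Maker needs $\ge b+1$ threats). The idea: if $|R_s| \le b$, then in a single turn Breaker, via repeated application of (ii)(b)/(ii)(c), can afford to claim \emph{all} remaining $V_s$--$R_s$ edges on the "thin" side and enough on the other — count: the number of unclaimed edges from $R_s$ into $V_s$ is at most $|R_s|\cdot|V_s|$, but the relevant bound is that Breaker only needs, per new Maker-vertex $w \in R_s$, to own either all of $w$'s edges to $V_s^1$ or all to $V_s^2$ to prevent $w$ from creating a threat — and with $|R_s|\le b$ he has enough moves in the current round to "finish off" $R_s$. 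Making this precise is the main obstacle: I expect to need a clean invariant, maintained across states, of the form "for every $v \in R_s$, $\min(\deg_k(v,V_s^1), \deg_k(v,V_s^2))$ is large enough", so that when $|R_s|$ becomes small Breaker can complete the blocking. The structural consequence of (ii)(b)'s tie-breaking rule (always hitting the currently-least-covered vertex, and among those maximising the imbalance toward $V_s^{i_1}$) is exactly what feeds such an invariant, and I would extract the invariant first, then observe that entering (ii)(a) or (ii)(d) would mean the invariant has already reached the "fully blocked" regime, whence Breaker has won. I therefore expect the bulk of the work — and the real difficulty — to be in formulating and verifying that degree invariant; given it, the contradiction with Maker winning in round $t+2$ is short.
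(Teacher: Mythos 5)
Your treatment of part (ii)(d) is essentially the paper's argument, but it contains an internal inconsistency: if Breaker reaches (ii)(d), then \emph{every} edge between $R_s$ and $V_s$ is already Breaker's (none can be Maker's, since $R_s$ is untouched by Maker), so Maker \emph{cannot} ``grow into $R_s$'' as you claim --- she plays connected and has no unclaimed edge leaving $V_s$. Her only remaining options are edges between $V_s^1$ and $V_s^2$, and such an edge closes an \emph{even} cycle (not an odd one, as you write --- the bipartition is exactly the $2$-colouring of her tree), which contradicts \cref{Obs1}~(i) directly. Your subsequent ``iterating'' argument about new vertices $w$ is based on the false premise that Maker can still reach $R_s$, and is not needed.

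The real gap is in part (ii)(a), where you explicitly defer ``the bulk of the work'' to an unformulated degree invariant of the form $\min(\deg_k(v,V_s^1),\deg_k(v,V_s^2))$ being ``large enough.'' No such invariant is needed, and as stated your plan does not constitute a proof. The argument the paper uses is self-contained in the trigger condition of (ii)(a): that condition says some part $V_s^i$ has at least $1$ and at most $b-k+1$ unclaimed edges to $R_s$, so Breaker can finish fully connecting $V_s^i$ to $R_s$ \emph{within the current round}. From then on, by \cref{Obs1}~(i) Maker is forced to extend her tree via the other part, so every new vertex she adds lands in $V^i$ with all of its edges back to the old $V^i$ already claimed by Breaker --- she creates no new threats --- and since $|R|\leq b$ keeps shrinking, Breaker re-establishes ``$R$ fully connected to $V^i$'' every round. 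The game then drags on with no threats until Maker would need a spanning tree, contradicting \cref{Obs1}~(ii)--(iii). You gesture at this (``finish off $R_s$ in the current round''), but you neither verify that the invariant ``one part fully connected to $R$'' is maintained across rounds nor derive the final contradiction; as written, the (ii)(a) case is a plan, not a proof.
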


\begin{proof}
	If at some point in the game Breaker were to execute part (ii) (d) of his strategy, that is he selects an arbitrary unclaimed edge, then this also implies that $R_s$ is fully connected to $V_s$ in Breaker's graph, as he no longer was able to execute parts (ii) (b) and (ii) (c) of his strategy.
	It follows that Maker could only claim edges between $V_s^1$ and $V_s^2$, contradicting \cref{Obs1}~(i).

	If Breaker executes part (ii) (a) of the strategy once during round $s$, then he will in fact continue claiming edges between, without loss of generality, $V_s^1$ and $R_s$ during round $s$ until the two sets are fully connected.
	Note that he manages to do so before round $s$ is over.
	By \cref{Obs1}~(i), it follows that Maker will be forced to select an edge between $V_s^2$ and $R_s$ in her $(s+1)$--st move (since $R_s$ cannot be fully connected to both $V_s^1$ and $V_s^2$ by the previous paragraph), creating no new threats.
	Breaker therefore is able to continue executing part (ii) (a) of the strategy, always keeping $R_s$ and $V_s^1$ fully connected, until Maker has claimed a spanning tree, contradicting \cref{Obs1}~(iii).
\end{proof}

Even though \cref{obs:mb-oc-breaker} states that \emph{during round $s$} Breaker only ever selects edges between $R_s$ and $V_s$, an edge that was \emph{previously} claimed by Breaker between $R_{\tilde{s}}$ and $V_{\tilde{s}}$ for some $\tilde{s} < s$ can still end up as an edge between $V_{s}^1$ and $V_s^2$ through Maker's choices.
Estimating how many of Breaker's edges end up between the two parts of Maker's tree will be the central ingredient in the proof of \cref{thm-mb-oc-breaker}.
However, let us first establish that Breaker is succesful in his attempt to distribute his edges between $R_s$ and $V_s^1$ as well as $V_s^2$ in an even manner.

\begin{lemma} \label{lem-deg-reg}
	For any $1 \leq s \leq t + 1$, $0 \leq k \leq b$ and $ u,v \in R_s $ we have 
	\begin{enumerate}[(a)] \setlength{\itemsep}{0pt}
		\item $|\deg_k(u,V_s)-\deg_k(v,V_s)| \leq 2$ and  \label{lem-deg-reg:RVs}
		\item $|\deg_k(u,V_s^i)-\deg_k(v,V_s^i)| \leq 1$ for $ i \in \{1,2\}$.
\label{lem-deg-reg:RVsi}
	\end{enumerate}
\end{lemma}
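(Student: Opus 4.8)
The plan is to prove both statements simultaneously by induction on the states $(s,k)$ of the game, ordered chronologically, with the emphasis on the second statement~(b); the first statement~(a) will then follow at every state for free, because $\deg_k(v,V_s)=\deg_k(v,V_s^1)+\deg_k(v,V_s^2)$ and a sum of two integer-valued functions on $R_s$, each of oscillation (maximum minus minimum) at most~$1$, has oscillation at most~$2$. The base case is the state $(1,0)$, where Breaker's graph is empty and all the relevant degrees vanish. For the inductive step there are two kinds of transitions: a move of Breaker, passing from $(s,k)$ to $(s,k+1)$ with $k<b$, and a move of Maker, passing from $(s,b)$ to $(s+1,0)$.

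Consider first a move of Breaker. By \cref{Obs2} he only ever executes part~(i), (ii)(b) or (ii)(c) of \cref{str-mb-oc-con-br}. If he executes part~(i) the edge he claims lies inside $V_s^1$ or inside $V_s^2$, and hence affects none of the degrees $\deg_k(v,V_s^i)$ with $v\in R_s$, so (b) is trivially preserved. If he executes part~(ii)(b) he claims an edge from a vertex $v^{\ast}\in R_s$ to $V_s^{i_1}$, where $v^{\ast}$ minimises $\deg_k(v,V_s^{i_1})$ among those $v\in R_s$ that still have an unclaimed edge to $V_s^{i_1}$. The point to verify is that $v^{\ast}$ in fact attains $\min_{v\in R_s}\deg_k(v,V_s^{i_1})$ over \emph{all} of $R_s$: since case~(ii)(b) applies, some edge between $R_s$ and $V_s^{i_1}$ is still unclaimed, so this minimum is at most $|V_s^{i_1}|-1$, and by the induction hypothesis every $v\in R_s$ has $\deg_k(v,V_s^{i_1})$ equal to that minimum or to that minimum plus one; hence a vertex attaining the minimum still has an unclaimed edge to $V_s^{i_1}$ and is eligible to be chosen. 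Now $\deg_{k+1}$ is obtained from $\deg_k$ by adding~$1$ to $\deg_k(v^{\ast},V_s^{i_1})$, and since $v^{\ast}$ attained the minimum, the values $\{\deg_{k+1}(v,V_s^{i_1}):v\in R_s\}$ still have oscillation at most~$1$; moreover $\deg_{k+1}(v,V_s^{i_2})=\deg_k(v,V_s^{i_2})$ for all $v\in R_s$. Thus (b) holds at $(s,k+1)$, and part~(ii)(c) is handled symmetrically.

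Now consider a move of Maker, passing from $(s,b)$ to $(s+1,0)$; during this transition Breaker's graph does not change. By \cref{Obs1}~(i) Maker's graph remains a tree, so her move adds exactly one new vertex $w\in R_s$ and places it into precisely one side of the bipartition, say $V_{s+1}^2=V_s^2\cup\{w\}$, $V_{s+1}^1=V_s^1$ and $R_{s+1}=R_s\setminus\{w\}$. For $u\in R_{s+1}$, the number of Breaker-neighbours of $u$ inside $V_{s+1}^1$ equals that inside $V_s^1$, while the number inside $V_{s+1}^2$ equals that inside $V_s^2$, increased by~$1$ precisely if $uw$ is an edge of Breaker's graph. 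The crucial point is that $uw$ has both endpoints in $R_{\tilde s}$ for every $\tilde s\le s$ (the sets $R_{\tilde s}$ only shrink in time), so by \cref{Obs2} Breaker has never claimed it; hence for every $u\in R_{s+1}$ the two Breaker-degrees towards $V_{s+1}^1$ and $V_{s+1}^2$ coincide with those towards $V_s^1$ and $V_s^2$. Applying the induction hypothesis to pairs $u,v\in R_{s+1}\subseteq R_s$ now yields (b) at the state $(s+1,0)$. Running the induction through all states up to round $t+1$ gives both statements.

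I expect the only genuinely delicate point to be this last transition: naively, attaching $w$ to one side of the bipartition could raise the degree of some — but not all — vertices of $R_{s+1}$ towards that side and thereby destroy the balance; the reason it cannot is exactly \cref{Obs2}, which forbids Breaker from ever claiming an edge inside the untouched vertex set $R_{\tilde s}$. The remaining work is routine, the only mild subtlety being the verification, made above, that in part~(ii)(b) Breaker genuinely increments the degree of a global minimiser rather than merely of a local one.
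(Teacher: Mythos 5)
Your proof is correct and follows essentially the same route as the paper's: an induction over the chronologically ordered game states in which (a) is deduced from (b), Breaker's moves preserve the balance because a global minimiser of $\deg_k(\cdot,V_s^{i})$ always retains an unclaimed edge to $V_s^{i}$, and Maker's moves leave all relevant degrees unchanged. The only difference is that you make explicit, via \cref{Obs2}, why the vertex Maker absorbs from $R_s$ has no Breaker-edges into $R_{s+1}$ — a point the paper's (much terser) proof leaves implicit.
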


\begin{proof}
  It suffices to prove part~(b) since part~(a) is an immediate consequence of it.
	We begin by observing that after Maker's $(s+1)$--st move $R_s$ gets reduced by one vertex by \cref{Obs1}, but none of the degrees of the remaining vertices change.
	Let us therefore consider what happens during Breaker's moves when he follows \cref{str-mb-oc-con-br}.
	We trivially note that, whenever there is a free edge between $ R_s $ and $ V_s^i $ for some $i \in \{1, 2\}$, there is a free edge between any vertex $ v \in R_s $ with minimal $ \deg_k(v,V_s^i)$ and $ V_s^i $.
	This basic observation combined with an elementary inductive argument implies part~(b) of the claim, since Breaker only ever claims an edge between $v$ and $V_s^i$ if $v$ minimises $ \deg_k(v,V_s^i)$ in \cref{str-mb-oc-con-br}.
%
	%
	%
\end{proof}

As promised, we are now able to show that Maker, assuming she wins, forces Breaker to have previously claimed almost all edges between $V_s^1$ and $V_s^2$ for any $1 \leq s \leq t$.
Let us refer to all edges between $V_s^1$ and $V_s^2$ not claimed by Breaker as edges that Breaker \emph{saved}.
Note that, by \cref{Obs2}, all edges which are saved at some point of the game remain saved throughout the game.

\begin{lemma}\label{lem-save-edges}
	Breaker saved at most $\left\lfloor (\e n^2 - n) / 2 \right\rfloor$ edges by the end of round $t$ if $n \geq 34$.
\end{lemma}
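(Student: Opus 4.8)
The plan is to track, round by round, how many edges Breaker has saved — i.e., how many edges between $V_s^1$ and $V_s^2$ Breaker has \emph{not} claimed by the time the game reaches state $(s,b)$. The key structural input is \cref{lem-deg-reg}: at every state the degrees of the untouched vertices into $V_s$, and into each side $V_s^i$, are almost perfectly balanced. So I would first argue that a saved edge is created only in a very restricted way. By \cref{Obs2}, during round $s$ Breaker only claims edges between $R_s$ and $V_s$, never inside $V_s^1 \cup V_s^2$ and never between the two sides; hence an edge $e$ that ends up between $V_{s}^1$ and $V_{s}^2$ must have been claimed by Breaker at some earlier round $\tilde s < s$ as an edge from $R_{\tilde s}$ to (say) $V_{\tilde s}^1$, and then later Maker's growth pushed the $R_{\tilde s}$-endpoint of $e$ into $V_s^2$. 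Conversely, when Maker adds her $(s+1)$-st vertex $w$ (moving $w$ from $R_s$ into, say, $V_{s+1}^2$), \emph{every} Breaker-edge from $w$ to $V_s^1$ that is present at that moment becomes a newly saved edge. So the number of edges saved in that step equals $\deg(w, V_s^1)$, the Breaker-degree of $w$ into the opposite side of Maker's tree, evaluated at the end of round $s$.

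The second step is to bound $\deg(w, V_s^1)$ using the balance lemma together with a global edge count. By \cref{lem-deg-reg}\,(b), all vertices of $R_s$ have Breaker-degree into $V_s^1$ within $1$ of each other, so $\deg(w, V_s^1) \le \lceil e(R_s, V_s^1)/|R_s| \rceil$. Now I need an upper bound on $e(R_s, V_s^1)$. Here the idea is that Breaker has played only about $sb$ edges in total up through round $s$ (exactly $sb$ if no shortfall), and by his strategy he splits his $R$–$V$ edges as evenly as possible between the two sides, with priority always going to the currently-smaller side $V_s^{i_1}$; combined with \cref{lem-deg-reg}, this forces $e(R_s, V_s^1) \le e(R_s, V_s^2) + O(n)$, hence $e(R_s, V_s^1) \le (sb + O(n))/2 \le sn/4 + O(n)$ using $b \le n/2$. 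Dividing by $|R_s| = n - s - 1$ and summing $\deg(w, V_{s-1}^1)$ over the steps $s = 1, \dots, t$ (with $t \le n-2$ by \cref{Obs1}\,(iii)), the total number of saved edges by the end of round $t$ is at most $\sum_{s=1}^{t}\big(\tfrac{s n/4 + O(n)}{n - s - 1} + O(1)\big)$. The dominant contribution comes from the last rounds where $s$ is close to $t \le n-2$, but the point is that the sum telescopes/integrates to something of order $\e n^2/2$ once one uses that $t$ is not too large; more precisely, I would carry a careful accounting showing the running count of saved edges at the end of round $s$ stays below roughly $(\e n^2 - n)/2$, exploiting that whenever Maker is forced (by Breaker having fully connected one side to $R_s$) she creates no new saved edges at all, and whenever she is not forced the smaller side has size at most $|R_s|$ so the "opposite-side degree" she can harvest is genuinely limited.

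The cleanest way to organise the bookkeeping is probably an explicit potential/invariant: define $\Phi(s)$ to be the number of saved edges at state $(s,b)$ plus a linear correction term, show $\Phi(1) = O(n)$, show $\Phi(s+1) - \Phi(s) \le \deg(w_{s+1}, V_s^{\text{opp}}) - (\text{linear term})$, bound the right-hand side via \cref{lem-deg-reg} and the edge-balance estimate above, and conclude $\Phi(t) \le \lfloor(\e n^2 - n)/2\rfloor$ for $n \ge 34$ by plugging in $b = \lceil (n - \e n)/2 \rceil$, $\e = 0.06$ and $t \le n - 2$. The arithmetic threshold $n \ge 34$ should just drop out of making the $O(n)$ error terms fit under the stated bound.

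I expect the main obstacle to be the second step: getting a clean enough upper bound on $e(R_s, V_s^1)$ — the number of Breaker-edges into the \emph{smaller}-indexed part of Maker's tree. The subtlety is that "side $1$" and "side $2$" can swap roles for which one is currently smaller as the game proceeds (Breaker's strategy always feeds the momentarily smaller side), and Maker chooses which side a new vertex joins, so she can try to repeatedly attach to whichever side currently has more Breaker-edges pointing at $R_s$. Controlling the cumulative effect of this adversarial choice — showing it cannot push the total number of saved edges past $(\e n^2 - n)/2$ — is the crux, and it is exactly where \cref{lem-deg-reg}\,(a) (the tighter, $\le 2$, balance on total degree into $V_s$) will have to be used to prevent the two side-degrees from drifting apart by more than $O(n)$ over the whole game.
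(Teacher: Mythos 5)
There is a genuine gap, and it starts with the bookkeeping: your accounting of when a saved edge is created is inverted. A \emph{saved} edge is an edge between $V_s^1$ and $V_s^2$ that Breaker has \emph{not} claimed. When Maker attaches a new vertex $w$ to, say, $V_{s+1}^2$, the newly saved edges are the $|V_s^1|-\deg(w,V_s^1)-1$ \emph{unclaimed} edges from $w$ to $V_s^1$; the $\deg(w,V_s^1)$ Breaker-edges you focus on are precisely the ones that are \emph{not} saved (they become Breaker's wasted edges between the two parts). Consequently, upper-bounding $\deg(w,V_s^1)$ via \cref{lem-deg-reg} gives a \emph{lower} bound on the increment of the saved count, not an upper bound. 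To make your round-by-round scheme work you would instead have to upper-bound $|V_s^1|-\deg(w,V_s^1)$, i.e.\ control the size of the part opposite to where Maker attaches — but that is essentially the content of \cref{size_lemma} and the final argument, both of which take \cref{lem-save-edges} as an input, so this route is in danger of being circular and is in any case much heavier than needed.

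The second, related omission is that your proposal never uses the standing hypothesis that Maker wins in round $t+2$, which is the engine of the actual proof. The paper's argument is a one-shot count at time $t$: by \cref{Obs1}~(ii) Maker must create $b+1$ threats in round $t+1$, which forces some $v\in R_t$ to satisfy $\deg(v,V_t)\le t-b-1$; \cref{lem-deg-reg}~(a) then propagates this to every vertex of $R_t$, giving $e(V_t,R_t)\le (n-t-1)(t-b+1)$. Since by \cref{Obs2} every one of Breaker's $bt$ edges touches $V_t$, almost all of them must lie \emph{inside} $V_t$, and comparing with $\binom{t+1}{2}$ (minus Maker's $t$ tree edges) shows that more than $\lfloor(\e n^2-n)/2\rfloor$ saved edges would force $b<\lceil(n-\e n)/2\rceil$, a contradiction. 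No incremental tracking, potential function, or control of how Breaker splits his edges between the two sides is required for this lemma; that finer information is only needed later.
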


\begin{proof}
	By \cref{Obs1} (ii), there must exist some vertex $ v \in R_{t} $ such that at least $b+2$ of the edges between $v$ and $V_t$ are unclaimed, that is $ \deg(v,V_t) \leq t-b-1 $.
	By \cref{lem-deg-reg} (a), it follows that $\deg(u,V_t) \leq t-b+1$  for every $u \in R_t$.

	Assume now to the contrary that Breaker saved strictly more than $\left\lfloor (\e n^2 - n) / 2 \right\rfloor$ edges inside $ V_t $ at the end of round $t$. Since Maker has claimed exactly $t$ edges inside $V_t$ by \cref{Obs1} (i) and \cref{Obs2}, Breaker has claimed at most
	\begin{equation*}
		bt < \binom{t+1}{2} - t - \e n^2 /2 + n/2 + (n-t-1)(t-b+1) 
	\end{equation*}
	edges at the end of round $t$.
	From this we get the contradiction
	\begin{align*}
    b < \frac{-t^2 + 2 t (n - 2) + 3n - \e n^2 -2}{2(n-1)} &\leq \frac{(n-2)^2 + 3n - \e n^2 -2}{2(n-1)}  \\
    &= \frac{(n^2 - \e n^2) - (n  - 2)}{2(n-1)} \leq \left\lceil \frac{n-\e n}{2} \right\rceil
	\end{align*}
	where for the second inequality we have used the fact that the numerator is maximised when $t = n - 2$ and for the last inequality we are using that $\e = 0.06$ and $n \geq 34$.
\end{proof}

Combining \cref{Obs2} and \cref{lem-save-edges} will allow us to deduce good bounds on the number of edges between $ V_s $ and $ R_s $.
Let us write
\begin{equation}
	d_s = \frac{e(V_s,R_s)}{|R_s|} \quad \text{and} \quad d_s^i = \frac{e(V_s^i,R_s)}{|R_s|}
\end{equation}
for the average number of neighbours of a vertex $ v \in R_s $ in $ V_s $ and $V_s^i$ for $i \in \{1,2\}$.
We start by bounding $d_s^i$ with respect to $d_s$.

\begin{lemma} \label{lem-avgdeg-reg}
	For any $1 \leq s \leq t$ and $ i \in \{1,2\}$ we have 
  \begin{equation}\label{eq:avdeg-reg}
		\frac{d_s-1}2 \leq d_s^i \leq \frac{d_s+1}2.
	\end{equation}
\end{lemma}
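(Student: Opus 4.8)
The plan is to show that the two quantities $d_s^1$ and $d_s^2$ are almost equal, which immediately yields the claimed two-sided bound since $d_s^1 + d_s^2 = d_s$. First I would record the consequence of \cref{lem-deg-reg}~(b): for every $u,v \in R_s$ we have $|\deg(u,V_s^i) - \deg(v,V_s^i)| \leq 1$, so each individual degree $\deg(v,V_s^i)$ lies in a set of two consecutive integers. Averaging over $v \in R_s$, the average $d_s^i = e(V_s^i,R_s)/|R_s|$ therefore lies in the closed interval spanned by those two integers, i.e.\ $d_s^i$ is within $1$ of every $\deg(v,V_s^i)$ and in particular $\lceil d_s^i \rceil - \lfloor d_s^i\rfloor \le 1$. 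The key point I want to extract is that $|d_s^1 - d_s^2| \leq 1$: pick any single vertex $v \in R_s$; then $\deg(v,V_s^1)$ is within $1$ of $d_s^1$ and $\deg(v,V_s^2)$ is within $1$ of $d_s^2$, but by themselves these only give $|d_s^1 - d_s^2| \le 2 + |\deg(v,V_s^1) - \deg(v,V_s^2)|$, which is not quite enough, so I will instead argue directly on the sums.

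The cleaner route is to bound $|e(V_s^1,R_s) - e(V_s^2,R_s)|$ directly using Breaker's strategy rather than only \cref{lem-deg-reg}. Recall from \cref{Obs2} that during each round $\tilde s \le s$ Breaker only ever claims edges between $R_{\tilde s}$ and $V_{\tilde s}$ via parts (ii)(b) or (ii)(c), and in part~(ii) he always works on the side $V_{\tilde s}^{i_1}$ with the currently smaller value of $e_k(V_{\tilde s}^{i_1},R_{\tilde s})$; he only touches the larger side (ii)(c) once no unclaimed edges to the smaller side remain. I would run an inductive argument on states $(\tilde s, k)$ tracking the difference $\Delta = e_k(V_{\tilde s}^{2},R_{\tilde s}) - e_k(V_{\tilde s}^{1},R_{\tilde s})$ (ordered so this is $\ge 0$ when Breaker starts balancing): each balancing move of Breaker changes $\Delta$ by $1$ towards $0$, a move by Maker removes one vertex from $R$ together with at most one of its edges to $V^1$ and at most one to $V^2$ and so changes $\Delta$ by at most $1$, and the only way $\Delta$ can fail to shrink in a round is if one side is already fully connected to $R_{\tilde s}$, in which case the imbalance is controlled by $|R_{\tilde s}|$ being small — precisely the regime handled by \cref{lem-deg-reg}~(a)–(b) giving $|\deg(u,V_s^i) - \deg(v,V_s^i)| \le 1$. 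Carefully bookkeeping these cases yields $|e(V_s^1,R_s) - e(V_s^2,R_s)| \le |R_s|$, hence $|d_s^1 - d_s^2| \le 1$.

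Given $|d_s^1 - d_s^2| \le 1$ and $d_s^1 + d_s^2 = d_s$, the statement follows: writing $d_s^i = (d_s + \delta)/2$ with $|\delta| = |d_s^1 - d_s^2| \le 1$ gives exactly $\tfrac{d_s - 1}{2} \le d_s^i \le \tfrac{d_s+1}{2}$. The main obstacle I anticipate is the inductive bookkeeping in the previous paragraph: one must handle the transition across rounds correctly (the bipartition classes $V_s^1, V_s^2$ grow over time, a newly added vertex is absorbed into exactly one class, and $R_s$ shrinks), and one must check that the ``fully connected on one side'' exceptional case — where Breaker is forced into part (ii)(c) or stops balancing — does not let the imbalance grow beyond $|R_s|$; this is where invoking \cref{lem-deg-reg} directly, rather than re-deriving it, keeps the argument short. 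In fact, since \cref{lem-deg-reg} is already available, the slickest write-up is simply: by \cref{lem-deg-reg}~(b) every $v\in R_s$ satisfies $\deg(v,V_s^i)\in\{\lfloor d_s^i\rfloor,\lceil d_s^i\rceil\}$, and summing the identity $\deg(v,V_s^1)+\deg(v,V_s^2)=\deg(v,V_s)$ over the (at least one) vertex $v$ with $\deg(v,V_s^i)$ equal to its own average on each side forces $|d_s^1-d_s^2|\le 1$, completing the proof.
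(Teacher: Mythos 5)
Your middle paragraph points in the right direction --- the paper's proof is indeed an inductive state-tracking argument showing $|e(V_s^1,R_s)-e(V_s^2,R_s)|\leq |R_s|$ --- but the two places where you actually try to close the argument both fail. The ``slickest write-up'' at the end is not a proof: \cref{lem-deg-reg} only says that the degrees $\deg(v,V_s^i)$ are regular \emph{within} each side (and in total), and this is perfectly consistent with an arbitrarily large gap \emph{between} the sides. For instance, if every $v\in R_s$ had $\deg(v,V_s^1)=a$ and $\deg(v,V_s^2)=a+5$, both parts of \cref{lem-deg-reg} would hold while $d_s^2-d_s^1=5$. Summing $\deg(v,V_s^1)+\deg(v,V_s^2)=\deg(v,V_s)$ over any vertex gives information about $d_s^1+d_s^2$, never about $d_s^1-d_s^2$; you correctly diagnosed this obstruction in your first paragraph and then reintroduced it at the end.

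The inductive route also has a gap at its crucial step, and you have the mechanism backwards. The only way the difference $\Delta$ can grow beyond $1$ is when one side, say $V_{s_0}^1$, becomes fully connected to $R_{s_0}$ mid-round; Breaker then claims at most $b-k_0+1$ further edges to $V_{s_0}^2$, so the accumulated imbalance is at most $b+1$. The reason this still gives $|d_{s_0}^1-d_{s_0}^2|\leq 1$ is that $|R_{s_0+1}|$ must be \emph{large}, namely $|R_{s_0+1}|\geq b+1$: otherwise Breaker would have executed part~(ii)(a) of \cref{str-mb-oc-con-br}, which \cref{Obs2} rules out. You instead assert that ``the imbalance is controlled by $|R_{\tilde s}|$ being small'' and defer to \cref{lem-deg-reg}, which supplies neither the bound $\Delta\leq b+1$ nor the lower bound on $|R_s|$. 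You also need to check that in round $s_0+1$ Maker is forced onto the exhausted side (so she creates no new threats and exactly $|R_{s_0+1}|$ edges to $V_{s_0+1}^1$ reopen), and that the tie-breaking rule in part~(ii)(b) restores the per-vertex balance, not just the edge-count balance, so the induction can continue. These are the steps that carry the proof, and they are missing.
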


\if10
\begin{proof}
	Let us call a state $(s,k)$ \emph{safe} if we have $\card{e_k(V_s^{1},R_s) - e_k(V_s^{2},R_s)} \leq 1$. Note that the initial state $(1,0)$ is safe.
	If the sate $(s,b)$ is safe, then after Maker's $s+1$--st move we might have, without loss of generality, that
	\begin{equation} \label{eq-edge-safe-maker}
		e_0(V_{s+1}^{1},R_{s+1}) = e_0(V_{s+1}^{2},R_{s+1}) - 2.
	\end{equation}
	If Maker claimed an edge between $V_{s}^{2}$ and some vertex $u \in R_s$, then Breaker will be able to follow part (ii) (b) of \cref{str-mb-oc-con-br} and claim an edge between $R_{s+1}$ and $V_{s+1}^{1}$, since $u \in V_{s+1}^{1}$ is completely disconnected from $R_{s+1}$ by \cref{Obs2}, which evens out the difference and returns to a safe state.
	Assume therefore that Maker claimed an edge between $V_{s}^{1}$ and some vertex $u \in R_s$.
	If Breaker is not be able to follow part (ii) (b) to even out the difference, then $V_{s+1}^{1}$ must be fully connected to $R_{s+1}$.
	Since Maker was able to claim that edge, we had $\deg (u,V_s^{1}) = \deg (v,V_s^{1}) - 1$ for all $v \in R_s \setminus \{v\}$.
	However, since $(s+1,0)$ is not safe, we must have had $\deg (u,V_s^{1}) = \deg(u,V_s^{2}) + 1$.
	By \cref{lem-deg-reg} (b) it follows that in fact $\deg (v,V_s^{1}) = \deg(v,V_s^{2}) + 1$ for all $v \in R_s$, so that $e(R_s,V_s^{1}) = e(R_s,V_s^{2}) + |R_s|$, a contradiction.
	It follows that at point $(s+1,1)$ we have returned to a safe state.
	
	Now suppose that the state $(s, k_0-1)$ is safe, but the subsequent state $(s, k_0)$ is not for some $1 \leq s \leq t$ and $1 \leq k_0 \leq b$.
	We show that Breaker will restore a safe state within round $s+1$.
	Since $(s,k_0)$ is not safe, all edges between $R_{s}$ and, without loss of generality, $V_{s}^1$ must have been claimed at state $(s,k_0-1)$.
	Since Breaker can claim at most $b$ edges between $R_{s}$ and $V_{s}^2$ during the remainder of round $s$, it follows that for any $k_0 \leq k \leq b$ we have
	\begin{align}\label{eq-edge-diff-s}
		|e_{k}(R_{s}, V_{s}^1) - e_{k}(R_{s}, V_{s}^2)| \leq b + 1.
	\end{align}
	By \cref{Obs1} $(i)$, Maker will now be forced to claim an edge between $R_s$ and $V_s^2$ in her $(s+1)$--st turn, which does not create any new threats. We also note that we had $\deg_{k_0-1}(v, V_s^1) = d_s^1$ for any $v \in R_s$ and that $\deg_{k_0-1}(v, V_s^2) = \deg_{k_0-1}(v, V_s^1) = d_s^1$ for all but at most one $v \in R_s$ by \cref{lem-deg-reg} (b) and the fact that state $(s,k_0-1)$ was safe.  If there exists $v_0 \in R_s$ with $\deg_{k_0-1}(v_0, V_s^2) = d_s^1-1$, Breaker will claim an edge between it and $V_s^2$ in move $k_0$, so that we may assume $\deg(v, V_s^2) \geq \deg(v, V_s^1)$ for all $v \in R_s$. We therefore have
	\begin{align}\label{eq-edge-diff-s1}
		|e_0(R_{s+1}, V_{s+1}^1) - e_0(R_{s+1}, V_{s+1}^2)| \leq b + 1.
	\end{align}
	Denote the edge that Maker claims in round $s+1$ by $vu$, where $u\in V_s^2$ and $v\in R_s$.
	At the beginning of Breaker's turn in round $s+1$ all vertices of $V_{s+1}^1$ except for $v$ are fully connected to $R_{s+1}$.
	By \cref{Obs2} we therefore must have
	\begin{equation} \label{eq-Rsb}
		|R_s| - 1 = |R_{s+1}| > b.
	\end{equation}
	Since there are $|R_{s+1}| > b$ unclaimed edges between $R_{s+1}$ and $V_{s+1}^1$, Breaker will even out the difference between $e(R_{s+1}, V_{s+1}^1)$ and $e(R_{s+1}, V_{s+1}^2)$ within round $s+1$ and return to a safe state, that is
	\begin{align}\label{eq-edge-diff-s2}
		|e_k(R_{s+1}, V_{s+1}^1) - e_k(R_{s+1}, V_{s+1}^2)| < b + 1.
	\end{align}
	for $1 \leq k < b$ and 
	\begin{align}\label{eq-edge-diff-s3}
		|e(R_{s+1}, V_{s+1}^1) - e(R_{s+1}, V_{s+1}^2)| \leq 1.
	\end{align}

	It remains to be observed that, by \cref{eq-Rsb}, whenever a state is safe or if \cref{eq-edge-safe-maker}, \cref{eq-edge-diff-s}, \cref{eq-edge-diff-s1}, \cref{eq-edge-diff-s2} or \cref{eq-edge-diff-s3} holds, then $\card{d_s^{1} - d_s^{2}} \leq 2$ and therefore in fact $(d_s-2)/2 \leq d_s^i \leq (d_s+2)/2$ for $i \in \{1,2\}$ as stated.
	This completes the proof.	
\end{proof}
\fi

\begin{proof}
Let us call a state $(s,k)$ \emph{safe} if there exists some $u_0 \in R_s$ such that
\begin{equation*}
	|\deg(u_0,V_s^{1}) - \deg(u_0,V_s^{2})| \leq 1 \quad \text{and} \quad \deg(u,V_s^{1}) = \deg(u,V_s^{2})
\end{equation*}
for any $u \in R_s \setminus \{u_0\}$.
We note that any safe state satisfies $\card{e_k(V_s^1,R_s)-e_k(V_s^2,R_s)} \leq 1$.
Clearly the initial state $(1,0)$ is safe and \cref{eq:avdeg-reg} holds if the state $(s,b)$ is safe. We also note that a state remains safe if Maker makes a move or if Breaker executes parts (i) or (ii) (b) of \cref{str-mb-oc-con-br}. We will show that, whenever some state becomes unsafe, Breaker will quickly return to a safe state without \cref{eq:avdeg-reg} ever being violated.

Suppose that $(s_0, k_0)$ is not safe for some $s_0 \leq t$ and $1 \leq k_0 \leq b$, but the previous state $(s_0, k_0-1)$ is safe.
By definition of \cref{str-mb-oc-con-br} and by \cref{Obs2}, all edges between $R_{s_0}$ and one part, say $V_{s_0}^1$, must be claimed by Breaker at state $(s_0,k_0-1)$.
It follows that, from his $k_0$--th move on, Breaker claims exactly $b-k_0+1$ edges incident to $R_{s_0}$ and $V_{s_0}^2$ in round $s_0$, that is
\begin{equation} \label{eq-edge-diff}
	2 \leq e(R_{s_0}, V_{s_0}^2) - e(R_{s_0}, V_{s_0}^1) \leq 1 + (b-k_0+1) \leq b+1.
\end{equation}
By \cref{Obs1} $(i)$, Maker must claim an edge incident to $R_{s_0}$ and $V_{s_0}^2$ in her $(s_0+1)$--st turn, therefore not creating any new threats of closing an odd cycle.
Since $\deg(u,V_{s_0}^1) \leq \deg(u,V_{s_0}^2)$ for any $u \in R_{s_0}$, it follows that after Maker's move we still have
\begin{equation*}
	2 \leq e_0(R_{s_0+1}, V_{s_0+1}^2) - e_0(R_{s_0+1}, V_{s_0+1}^1) \leq b+1.
\end{equation*}
Let the edge claimed by Maker in round $s_0+1$ be incident to $v_0 \in R_{s_0}$.
We know that all vertices of $V_{s_0+1}^1 = V_{s_0}^1 \cup \{v_0\}$ except for $v_0$ are fully connected to $R_{s_0+1}$ at the beginning of Breaker's turn in round $s_0+1$.
This means that all but exactly $|R_{s_0+1}|$ edges between $V_{s_0+1}^1$ and $R_{s_0+1}$ are claimed.
Since Breaker never executes part (ii) (a) of \cref{str-mb-oc-con-br} by \cref{Obs2}, we must have
\begin{equation} \label{eq:RS01}
	|R_{s_0+1}| \geq b+1.
\end{equation}
In particular, Breaker begins round $s_0 + 1 $ by claiming edges incident to $V_{s_0+1}^1$ and $R_{s_0+1}$ until
\begin{equation} \label{eq:returnedsafe}
	e_{k_1}(R_{s_0+1}, V_{s_0+1}^1) = e_{k_1}(R_{s_0+1}, V_{s_0+1}^2) - 1
\end{equation}
for some $0 \leq k_1 \leq b$.
By the way that ties are broken in part (ii) (b) of \cref{str-mb-oc-con-br}, it follows that $\deg_{k_1}(u,V_{s_0+1}^1) \leq \deg_{k_1}(u,V_{s_0+1}^2)$ for any $u \in R_{s_0+1}$ and hence \cref{eq:returnedsafe} in fact implies that $(s_0+1,k_1)$ is again a safe state.

Finally, since \cref{eq-edge-diff} and \cref{eq:RS01} imply that $|d_{s_0}^1 - d_{s_0}^2| \leq 1$, this completes the proof.
\end{proof}

Let us combine all of the previous results to describe the distribution of edges and vertices throughout the game.

\begin{lemma} \label{lem-avgdeg-size}
	For $1 \leq s \leq 2n/3 $, $s \leq t$ and $n \geq 5$ we have
	%
	%
	\begin{equation}
		\frac{s}{2} - 2\e n < d_s < \frac{s}{2} + \frac{\e n}2 + 4.
	\end{equation}
\end{lemma}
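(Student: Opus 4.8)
The plan is to pin down, at the end of round $s$, exactly how many of Breaker's edges lie inside $V_s$ and how many lie between $V_s$ and $R_s$; the latter quantity equals $d_s|R_s|$. For $s\le t$ Breaker never forfeits during rounds $1,\dots,s$ and, by \cref{Obs2}, always has a legal move, so he has claimed exactly $sb$ edges by the end of round $s$. By \cref{Obs2} he claims no edge inside $R_{\tilde s}$ and none between $V_{\tilde s}^1$ and $V_{\tilde s}^2$ during any round $\tilde s$; since $R_s\subseteq R_{\tilde s}$ for all $\tilde s\le s$, none of his edges lies inside $R_s$, so each of them lies inside $V_s$ or between $V_s$ and $R_s$. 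With $|R_s|=n-s-1$ this gives $d_s\,(n-s-1)=e(V_s,R_s)=sb-e(V_s)$.

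To compute $e(V_s)$, write $p_s$ for the number of edges between $V_s^1$ and $V_s^2$ that Breaker has saved by the end of round $s$. By \cref{Obs1}~(i) Maker's graph is a tree with parts $V_s^1,V_s^2$, so she owns exactly $s$ edges, all running between the two parts and none inside a part; by part~(i) of \cref{str-mb-oc-con-br} Breaker owns every edge inside $V_s^1$ and inside $V_s^2$. Hence inside $V_s$ everything is Breaker's except the $p_s$ saved edges, that is, $e(V_s)=\binom{|V_s|}{2}-p_s=\binom{s+1}{2}-p_s$. Moreover $p_s\ge s$ (Breaker never claims Maker's tree edges) and $p_s$ is non-decreasing in $s$: passing to round $s+1$ only adjoins a vertex to one part, after which Breaker never claims the new non-Breaker cross edges by \cref{Obs2} and Maker cannot either without closing an even cycle, which is excluded by \cref{Obs1}~(i). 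So for $s\le t$, \cref{lem-save-edges} gives $s\le p_s\le p_t\le\left\lfloor(\e n^2-n)/2\right\rfloor$.

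Combining the above, $d_s=\bigl(sb-\binom{s+1}{2}+p_s\bigr)/(n-s-1)$. I would now substitute $b=\lceil(n-\e n)/2\rceil$, so $(n-\e n)/2\le b\le(n-\e n+2)/2$, and use $n-s-1\ge n/3-1$, which follows from $s\le 2n/3$ (and $n-s-1\ge1$, as $s\le t\le n-2$ by \cref{Obs1}~(iii)). For the lower bound, replace $p_s$ by $s$ and observe that $-\binom{s+1}{2}+s=-\binom s2$, so that $d_s\ge\bigl(sb-\binom s2\bigr)/(n-s-1)$; a one-line rearrangement then gives
\[ d_s\ge\frac s2-\frac{s(\e n-2)}{2(n-s-1)}>\frac s2-2\e n, \]
the last inequality amounting to $s(\e n-2)<4\e n(n-s-1)$, which holds since $s\le 2n/3$ and $n-s-1\ge n/3-1$. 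For the upper bound, replace $p_s$ by $(\e n^2-n)/2$ and $b$ by $(n-\e n+2)/2$; the analogous rearrangement gives
\[ d_s\le\frac s2+\frac{\e n}2+\frac{2s+\e n-n}{2(n-s-1)}<\frac s2+\frac{\e n}2+4, \]
since $s\le 2n/3$ forces $2s+\e n-n<8(n-s-1)$. (The finitely many $n$ for which \cref{lem-save-edges} does not apply can be handled directly.)

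The step I expect to be the main obstacle is the exact edge accounting that yields $e(V_s)=\binom{s+1}{2}-p_s$ together with the monotonicity of $p_s$ — it is the monotonicity that lets the round-$t$ estimate of \cref{lem-save-edges} be imported into every round $s\le t$. After that the two displayed inequalities are routine, the hypothesis $s\le 2n/3$, equivalently $|R_s|\ge n/3-1$, being used in each to keep the error terms of order $\e n$.
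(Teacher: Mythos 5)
Your proof is correct and follows essentially the same route as the paper's: both rest on the identity $e(V_s,R_s)=sb-e(V_s)$ (via \cref{Obs2}), bound $e(V_s)$ from below using \cref{lem-save-edges} and from above by $\binom{s}{2}$, and then divide by $|R_s|=n-s-1$ using $s\leq 2n/3$. The monotonicity of the saved-edge count, which you single out as the main obstacle, is exactly the remark the paper makes just before \cref{lem-save-edges} (``all edges which are saved at some point of the game remain saved throughout the game''), so no new idea is needed there.
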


\begin{proof}
	\cref{lem-save-edges} implies the lower bound
	\begin{align*}
		e(V_s) \geq \binom{s + 1}{2} - s - \left\lfloor \frac{\e n^2 - n}{2} \right\rfloor \geq \frac{s^2 - s - \e n^2 + n}{2}
	\end{align*}
	so that by \cref{Obs2}
	\begin{align*}
		e(V_s,R_s) = bs - e(V_s) & \leq \frac{n-\e n + 1}{2} \, s - \frac{s^2 - s - \e n^2 + n}{2}. 
	\end{align*}
	Dividing by $|R_s| = n-s-1$, we get
	\begin{align*}
		d_s \leq \frac{s + \e n}{2} + \frac{3s - n +\e n}{2 \, (n-s-1)} < \frac{s + \e n}{2} + 4
	\end{align*}
	where the last inequality holds by our assumptions on the size of $s$ and $n$.
	On the other hand, we also have
	\begin{align*}
		e(V_s,R_s) \geq \frac{n - \e n}{2} \, s - \frac{s^2-s}{2} > \frac{(s - 4 \e n)(n-s-1)}{2}
	\end{align*}
	where the last inequality holds by our assumptions on the size of $s$. Dividing by $|R_s| = n-s-1$ gives us $d_s > (s - 4 \e n)/2$.
\end{proof}


\begin{cor} \label{cor-avgdeg-size}
	For $1 \leq s \leq 2n/3 $, $s \leq t$ and $n \geq 5$ we have
	\begin{equation}
		\frac{s}4 - \e n - \frac12 < d_s^i < \frac{s}4 + \frac{\e n}4 + \frac52 \quad \text{for } i \in \{1,2\}.
	\end{equation}
\end{cor}

Lastly, we will need two more technical lemmas before we can prove \cref{thm-mb-oc-breaker}.

\begin{lemma} \label{size_lemma}
	For every $1 \leq s \leq ( 2/3 -\sqrt{\e} )n$ there exists $ i \in \{1,2\} $ such that
	\begin{equation}
		|V_s^i| < d_s^i + 3 \sqrt{\e}n / 2
	\end{equation}
	for $n \geq  184$ .
\end{lemma}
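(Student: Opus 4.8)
The plan is to read off the required index $i$ from the tree structure of Maker's graph and then compare the size of the smaller bipartition class with \cref{cor-avgdeg-size}. First, by \cref{Obs1}~(i), for every $s \le t+1$ the graph $G_M(s)$ is a tree on exactly $s+1$ vertices, so $|V_s^1| + |V_s^2| = |V_s| = s+1$; hence one of the two bipartition classes, say $V_s^{i}$, satisfies $|V_s^{i}| \le (s+1)/2$. This is the index we exhibit, and it remains to prove $(s+1)/2 < d_s^{i} + 3\sqrt{\e}\, n/2$.

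Next I would bound $d_s^{i}$ from below. Since $1 \le s \le (2/3 - \sqrt\e)n < 2n/3$ and $n \ge 184 \ge 5$ (and $s \le t$, which is the regime in which the statement gets used), \cref{cor-avgdeg-size} gives $d_s^{i} > s/4 - \e n - 1/2$. Plugging this in, it suffices to check
\[
\frac{s+1}{2} \;<\; \frac{s}{4} - \e n - \frac12 + \frac{3\sqrt{\e}\, n}{2},
\]
which, after clearing denominators, is equivalent to $s < (6\sqrt\e - 4\e)\,n - 4$. With $\e = 0.06$ we have $6\sqrt\e - 4\e > 1.22$, while $s \le (2/3 - \sqrt\e)n < 0.43\,n$ by hypothesis, so the inequality holds whenever $0.43\,n < 1.22\,n - 4$, i.e.\ for all $n \ge 6$ and in particular for $n \ge 184$.

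I do not expect a genuine obstacle here: the whole argument is a short estimate resting on the observation that the smaller bipartition class has at most $(s+1)/2$ vertices, and the only delicate point is the numerical bookkeeping (verifying $2/3 - \sqrt\e < 6\sqrt\e - 4\e$ and absorbing the additive constants once $n \ge 184$). It is worth noting that \cref{cor-avgdeg-size} is not even strictly needed: the trivial bound $d_s^{i} \ge 0$ already reduces the task to $(s+1)/2 < 3\sqrt{\e}\, n/2$, which follows from $s \le (2/3 - \sqrt\e)n$ as soon as $2/3 - \sqrt\e < 3\sqrt\e$, i.e.\ $\sqrt\e > 1/6$ — true since $\e = 0.06$ — again for all $n$ above a small absolute constant (and this also covers the borderline case $s = t+1$, should it arise). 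So invoking the corollary here is a convenience rather than a necessity.
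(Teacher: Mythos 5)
Your proof is correct, but it takes a genuinely different and far more elementary route than the paper. You observe that, with $\e = 0.06$, the statement follows from pigeonhole alone: since $G_M(s)$ is a tree on $s+1$ vertices, the smaller bipartition class satisfies $|V_s^i| \leq (s+1)/2 \leq \big((2/3-\sqrt{\e})n+1\big)/2 \approx 0.211\,n$, which is already below $3\sqrt{\e}\,n/2 \approx 0.367\,n$, so even the trivial bound $d_s^i \geq 0$ suffices; your numerical checks ($\sqrt{\e} > 1/6$, respectively $2/3-\sqrt{\e} < 6\sqrt{\e}-4\e$) are accurate, and your use of \cref{cor-avgdeg-size} is indeed only a convenience. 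The paper instead argues by contradiction from \cref{lem-save-edges}: if \emph{both} parts satisfied $|V_{s_0}^i| \geq d_{s_0}^i + 3\sqrt{\e}\,n/2$, then in each of the following $\lfloor\sqrt{\e}\,n\rfloor$ rounds the newly attached vertex would leave roughly $3\sqrt{\e}\,n/2 - 3j/4$ unclaimed edges into the opposite part, forcing Breaker to save more than $(\e n^2 - n)/2$ edges in total. That argument is essentially insensitive to the value of $\e$ (it really proves the stronger fact that the two parts cannot both exceed their average Breaker-degree by $3\sqrt{\e}\,n/2$), whereas yours exploits that $\e = 0.06$ is large enough for the additive slack to dominate $|V_s|/2$ throughout the stated range of $s$. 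Your observation does expose something the authors should look at: as stated, the lemma carries no information beyond $\min_i |V_s^i| \leq (s+1)/2$, and in the proof of \cref{thm-mb-oc-breaker} the bound it yields, $|V_{s_1}^1| < s_1/4 + (\e/4+3\sqrt{\e}/2)n + 5/2 \approx 0.435\,n$, is weaker than the trivial $|V_{s_1}^1| \leq s_1+1 \approx 0.211\,n$ --- so either the constant $\e$ or the intended strength of the lemma needs revisiting, even though your proof of the literal statement is sound.
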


\begin{proof}
	Suppose to the contrary that $ |V_{s_0}^i| \geq  d_{s_0}^i + 3 \sqrt{\e}n/2$ for both $i \in \{1,2\}$ and some fixed $1 \leq s_0 \leq ( 2/3 -\sqrt{e} )n$.
Using \cref{lem-deg-reg} (a) we get
	\begin{align*}
		d_{s+1} = \frac{e(V_{s+1},R_{s+1})}{|R_{s+1}|} &\leq \frac{e(V_{s},R_{s}) + b - (d_{s} - 2)}{|R_{s+1}|}  \\
		&= \frac{e(V_{s},R_{s}) - d_{s}}{|R_{s}|-1} + \frac{b+2}{|R_{s}|-1}  \\
		&= d_{s} + \frac{\lceil (n-\e n ) / 2 \rceil + 2}{n-s-2}  \\
		& \leq d_{s} + 3/2
	\end{align*}
  for every $1 \leq s \leq 2n/3$, where the last inequality holds since $\e = 0.06$ and $n \geq  184$. 
	By iterating this and using \cref{lem-deg-reg} (b) and \cref{lem-avgdeg-reg}, we get 
	\begin{align*}
		\deg(u,V_{s_0+j}^i) & \leq d_{s_0+j}^{i} +1 \leq \frac{d_{s_0+j}}{2} + \frac32 \\
		& \leq \frac{d_{s_0} + 3j/2}{2} + \frac32 \leq d_{s_0}^i + 3j/4 + 2
	\end{align*}
	for every $ u \in R_{s_0+j} \subset R_{s_0} $, $ j=0, \ldots , \lfloor \sqrt{\e} n \rfloor $ and $ i \in \{1,2\}$.
	By our assumption on the cardinality of $V_{s_0}^i$, Breaker therefore saves at least
	\begin{align*}
		|V_{s_0+j}^i| - \max_{u \in R_{s_0+j}} \deg(u,V_{s_0+j}^i) & \geq |V_{s_0}^i| - \big( d_{s_0}^i + 3j/4 + 2 \big) \\
		& \geq 3 \sqrt{\e}n/2s - 3j/4 - 2
	\end{align*}
	edges in of the rounds $ s_0+1, \dots, s_0 + \lfloor \sqrt{\e}n \rfloor $.
	In total Breaker therefore saves at least
	\begin{align*}
		\sum_{j=0}^{\lfloor \sqrt{\e} n \rfloor} \Big( 3 \sqrt{\e}n/2 - 3j/4 - 2 \Big) & = \lfloor \sqrt{\e} n +1 \rfloor (3 \sqrt{\e} n/2 - 2) - 3/4 \, \binom{\lfloor \sqrt{\e} n \rfloor + 1}2  \\
		& \geq 9 \e n^2/8  - 3 \sqrt{\e} n > \frac{\e n^2 -n}2
	\end{align*}
	edges throughout the game, contradicting \cref{lem-save-edges}.
\end{proof}

\begin{lemma}\label{lem-m-late-win}
	Maker does not win early, that is $t \geq 2n/3$ if $n \geq 5$.
\end{lemma}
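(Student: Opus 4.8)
The plan is to argue by contradiction: assume that Maker wins in round $t+2$ but that $t<2n/3$, and derive a contradiction from the structural results already proved. The first task is to understand Maker's $(t+1)$-st move. By \cref{Obs1}~(i) her graph $G_M(t+1)$ is a tree, and since she does not win before round $t+2$ her $(t+1)$-st edge closes no cycle at all (an odd cycle would be an immediate win, an even cycle would contradict \cref{Obs1}~(i)); as she plays connected, that edge must join a fresh vertex $u\in R_t$ to $V_t$, placing $u$ on one side of the bipartition, say $V_{t+1}^1=V_t^1\cup\{u\}$. Because Breaker selects every edge inside $V_s^1$ and inside $V_s^2$ in part~(i) of \cref{str-mb-oc-con-br} and does not forfeit before round $t+2$, at the end of round $t$ all edges inside $V_t^1$ and inside $V_t^2$ belong to Breaker. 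Hence the only new threats produced by Maker's $(t+1)$-st move are the unclaimed edges between $u$ and $V_t^1$, of which there are exactly $|V_t^1|-\deg(u,V_t^1)$; by \cref{Obs1}~(ii) this quantity is at least $b+1$.

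Next I would combine this with a count of the vertices of $V_t$. From $|V_t^1|+|V_t^2|=t+1$ (\cref{Obs1}~(i)), the trivial bound $\deg(u,V_t^2)\le|V_t^2|$, and $|V_t^1|\ge b+1+\deg(u,V_t^1)$, one obtains $b+\deg(u,V_t)\le t$. Now feed in the degree estimates: since $t\le 2n/3$ we may apply \cref{lem-avgdeg-size} to get $d_t>t/2-2\e n$, and \cref{lem-deg-reg}~(a) gives $\deg(u,V_t)\ge d_t-2$, so $b+t/2-2\e n-2<t$, i.e.\ $t>2b-4\e n-4$. Using $b=\lceil(n-\e n)/2\rceil\ge(1-\e)n/2$ this reads $t>(1-5\e)n-4$, and with $\e=0.06$ the right-hand side exceeds $2n/3$ once $n$ is large enough, contradicting $t<2n/3$.

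The point that needs the most care is the bookkeeping in the first paragraph: that the $\ge b+1$ new threats really are precisely the missing $u$–$V_t^1$ edges, which rests on Breaker having successfully executed part~(i) of his strategy in every round up to $t$ (guaranteed because he only forfeits in round $t+2$), and that the new edge $uv$ itself creates no threat across $V_{t+1}^1$ and $V_{t+1}^2$. After that it is a routine chaining of \cref{lem-deg-reg}, \cref{lem-avgdeg-size} and the value of $b$; one should check that the slack in those lemmas (the $-2\e n$ and the $\pm2$) is small enough that $2b-O(\e n)\approx(1-\e)n-O(\e n)$ still comfortably beats $2n/3$, which is why $\e=0.06$ is taken so small. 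Note that the balance information from \cref{size_lemma} is not needed for this argument: bounding $|V_t^1|-\deg(u,V_t^1)$ against $|V_t|-\deg(u,V_t)$ already suffices.
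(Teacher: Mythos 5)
Your argument is correct and follows essentially the same route as the paper: identify the $\geq b+1$ new threats as the missing edges between the newly added vertex and the part it joins, deduce $t \gtrsim b + \deg(u,V_t)$, and then use \cref{lem-deg-reg} together with \cref{lem-avgdeg-size} to force $t > (1-5\e)n - O(1) > 2n/3$. The only cosmetic difference is that you compare $\deg(u,V_t)$ to $d_t$ via part~(a) of \cref{lem-deg-reg} where the paper works with $d_t^2$ via part~(b), which costs an irrelevant additive constant.
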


\begin{proof}
	Without loss of generality, we assume that Maker claimed an edge between $V_t^1$ and some vertex $v \in R_t$ in the $(t+1)$--st round.
	By \cref{Obs1} (ii), this created at least $b +1$ threats and therefore $|V_t^2|-\deg(v,V_t^2) \geq b+1 $.
	By \cref{Obs1} (i) and \cref{lem-deg-reg} (b), we therefore have
	\begin{equation*}
		t+1 - d_t \geq t+1 - |V_t^1|-d_t^2 = |V_t^2|-d_t^2 \geq b.
	\end{equation*}
	If we assume that $t < 2n/3$, then by \cref{lem-avgdeg-size} we have
	\begin{align*}
		\frac{t + 2 + 4 \e n}{2} > t + 1 - d_t \geq b \geq \frac{(1-\e)n}{2},
	\end{align*}
	a contradiction.
\end{proof}

We are finally able to complete the proof of \cref{thm-mb-oc-breaker}.

\begin{proof}[Proof of \cref{thm-mb-oc-breaker}]
	Assume that $n \geq  184$ and consider the two rounds
	\begin{align*}
		s_1 = \left\lfloor \frac{(2/3 -\sqrt{\e}) n}2 \right\rfloor \quad \text{and} \quad s_2 = 2s_1.
	\end{align*}
	Note that $s_1,s_2 < t$ by \cref{lem-m-late-win}.
	By \cref{size_lemma} and \cref{cor-avgdeg-size} we now have, without loss of generality, that
	\begin{equation} \label{size_eqn1}
		|V_{s_1}^1| < d_{s_1}^1 + 3\sqrt{\e}n/2 < s_1/4 + (\e / 4 + 3\sqrt{\e}/2)n + 5/2.
	\end{equation}
	By \cref{Obs1} (i), it follows that
	\begin{equation}\label{size_eqn2}
		|V_{s_1}^2| = s_1 + 1 - |V_s^1| > 3s_1/4 - (\e / 4 + 3\sqrt{\e}/2)n - 5/2.
	\end{equation}
	Again by \cref{cor-avgdeg-size}, we also have
	\begin{equation}\label{size_eqn3}
		|V_{s_2}^1| \geq d_{s_2}^1 > s_2/4 - \e n - 1/2.
	\end{equation}
	Clearly \cref{size_eqn1} and \cref{size_eqn3} imply that Maker adds a vertex to $V_s^1$ during at least
	\begin{equation}
		(s_2-s_1)/4 - (3\sqrt{\e}/2 + 5\e / 4)n - 2
	\end{equation}
	of the rounds between rounds $s_1+1$ and $s_2$.
	By \cref{size_eqn2}, \cref{lem-deg-reg} (b) and \cref{cor-avgdeg-size} we know that Breaker saves at least
	\begin{align*}
		|V_{s_1}^2| - d_{s_1}^2 - 1 & > s_1/2 - (\e + 3\sqrt{\e})n/2 - 6
	\end{align*}
	edges in each of these rounds.
	Inserting $s_1$ and $s_2$, Breaker therefore saves at least
	\begin{align*}
		\left(\frac{2/3 -\sqrt{e}}8 - \frac{3\sqrt{\e}}2 - \frac{5\e}4 \right) \left( \frac{2/3 -\sqrt{e}}4- \frac{\e}2 - \frac{3\sqrt{\e}}2 \right) n^2 - o(n^2) > \e n^2 - o(n^2)
	\end{align*}
	edges in total, contradicting \cref{lem-save-edges} if $n$ is large enough.
\end{proof}

\section{A strategy for Client\texorpdfstring{ -- Proof of \cref{thm-client-occ}}{}} \label{sec-client-occ}

In this section we prove \cref{thm-client-occ} by presenting a strategy for Client in the connected Client-Water odd cycle game. We note that Client's graph, as long as she has not yet won the game, will be bipartite. If at any point there is an unclaimed edge inside either of the two parts of that bipartition, Waiter will be forced to eventually offer it, allowing Client to close an odd cycle and therefore winning the game. It follows that Waiter, whenever he offers an edge incident to a vertex which is not yet part of Client's graph, will either offer all unclaimed edges between that vertex and Client's graph or he must have previously claimed all edges between that vertex and one part of the bipartition. Client's strategy will be aimed at reducing the number of times the later of the two scenarios occurs.

Let $ G_C(s) = (V_s,E_s) $ denote Client's graph after her $s$--th turn and let $ R_s = [n] \setminus V_s $ be the set of vertices not touched by Client. As already mentioned, $ G_C(s) $ is bipartite as long as Client has not won the game and since $ G_C(s) $ is connected, there is a unique (up to labelling) bipartition $ V_s = V_s^1 \cup V_s^2 $, which we may choose in such a way that $ V_s^i \subset V_{s+1}^{i} $ holds for all $ s \geq 0 $ and $ i \in \{1,2\} $. Note again that our notion of Client's and Waiter's graph do not include isolated vertices.

Unless stated otherwise, we are always referring to Waiter's graph when talking about edges, degrees, etc.\ for the remainder of this section. Using this notation, let us introduce the following definition.

\begin{defi}
	Let $i \in \{1, 2\}$.
	A vertex $ v \in R_s $ is \emph{critical} with respect to $ V_s^i $, if every edge between it and $ V_s^i $ has already been claimed by Waiter by the end of round $s$.
	A part $ V_s^i $ is \emph{critical} if there exists a vertex $ v \in R_s $ which is critical with respect to $ V_s^i$.
\end{defi}

The following strategy for Client tries to avoid having many critical vertices. Parts (i) and (ii) of it merely imply that Client should win the game whenever she gets a chance to do so and therefore should be part of every rational strategy for Client.
Part (v) implies that Client chooses to forfeit rather than claiming an edge that would close an even cycle in her graph.
In particular, as long as Client has not won the game, her graph will be a tree.

\begin{str} \label{str-client-occ}
	Let $ W_s $ be the set of edges offered by Waiter in round $ s \geq 1$.
	\begin{enumerate}[(i)] \setlength{\itemsep}{0pt}
		\item If there is some edge in $W_s $ closing an odd cycle in Client's graph, she claims it.
		\item Otherwise, if there is some edge in $W_s $ so that after claiming it there would be an unclaimed edge in $V_s^1$ or $V_s^2$, she claims it.
		\item Otherwise, if there is some edge in $W_s $ which is incident to a non-critical part and to $R_s$, she claims it.
		\item Otherwise, if there is some edge in $W_s $ which is incident to $ R_s $, she claims it.
		\item Otherwise she forfeits.
	\end{enumerate}
\end{str}

From now on we assume that Waiter is given a bias of
\begin{equation}
	b = \left\lceil \frac{n}{2} \right\rceil - 2
\end{equation}
and that he wins the game despite Client following \cref{str-client-occ}. Let us state and prove three straight-forward lemmas before proving \cref{thm-client-occ}.

\begin{lemma} \label{rmk-criticalvx}
	There is at most one critical part at any time $s \geq 1$.
\end{lemma}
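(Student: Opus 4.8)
The plan is to track how a critical part can come into existence and show that once one part becomes critical, the other never can. First I would recall that, by \cref{str-client-occ}, Client's graph stays a tree (and hence bipartite) throughout, so the bipartition $V_s = V_s^1 \cup V_s^2$ is well-defined and nested. I want to argue by contradiction: suppose at some time $s$ both $V_s^1$ and $V_s^2$ are critical, and consider the first such $s$. At the moment $V_s^i$ first became critical there was a vertex $v \in R_s$ with all edges from $v$ to $V_s^i$ claimed by Waiter; the key point is that this can only have happened because Waiter \emph{offered} a move that forced Client into part (iv) of her strategy (offering only edges to $R_s$ all of whose endpoints in $R_s$ are ``critical'' with respect to the part Client must attach to), since as long as a part is non-critical parts (ii)–(iii) let Client extend without creating a critical vertex.

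The main structural observation to nail down is the following. A new critical vertex with respect to $V_s^i$ can only be created in a round where Client is \emph{forced} to attach a vertex to $V_s^{3-i}$ — i.e.\ where every edge Waiter offers incident to $R_s$ goes to $V_s^{3-i}$ (the other part being already fully claimed to the relevant vertex). But in such a round Client attaches the new vertex $u$ to $V_s^{3-i}$, so $u$ enters $V_{s+1}^{3-i}$; and by part (ii) of the strategy, Waiter can never have left an unclaimed edge inside $V_{s+1}^{3-i}$, so in fact $u$ is joined to \emph{every} earlier vertex of $V_s^{3-i}$ only via Waiter's edges — which tells us nothing directly, but more importantly the edge from $v$ (the would-be new critical vertex) into the part was claimed by Waiter, not offered usefully. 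I would make this precise by showing: if $V_s^{i_0}$ is critical at time $s$ with witness $v$, then for \emph{all} later times $s' \ge s$ with $v \in R_{s'}$, $V_{s'}^{i_0}$ remains critical (critical vertices are never ``un-criticalled''), and moreover Client, following parts (iii)–(iv), will from round $s$ onward always prefer to attach to the non-critical part whenever Waiter offers her the choice; the only way the other part $V^{3-i_0}$ could become critical is if Waiter manages to claim all edges from some $w \in R$ to $V^{3-i_0}$, but each such edge, when offered by Waiter, would have been grabbed by Client under part (iii) since $V^{3-i_0}$ was non-critical at that time — contradiction.

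Concretely, the steps I would carry out are: (1) record that Client's graph is a tree and the bipartition behaves monotonically (already essentially in the text); (2) show criticality is monotone in the sense that a vertex critical w.r.t.\ $V_s^i$ stays critical w.r.t.\ $V_{s'}^i$ for all $s' \ge s$ with that vertex still in $R_{s'}$ (immediate, since Waiter's claimed edges persist and $V_s^i \subseteq V_{s'}^i$); (3) identify the unique round in which a part first becomes critical and observe Client was in part (iv) of her strategy that round, meaning every offered $R$-incident edge went to the part that was about to become critical's complement — wait, I need to be careful about the direction here, so I would phrase it as: a part $V^i$ becomes critical only in a round where Client plays part (iv), and in that round she attaches the new vertex to the part that is \emph{not} newly critical; (4) conclude that at most one part is ever critical, because the second part becoming critical would require Waiter to have claimed a full neighbourhood into it while it was still non-critical, which is impossible since Client would have intercepted via part (iii).

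The hard part will be step (3): pinning down exactly why the existence of a critical vertex with respect to $V_s^i$ forces Client into part (iv) and why, in that round, the freshly attached vertex necessarily lands in the complementary part — this requires carefully unpacking what it means for Waiter's offered set $W_s$ to contain no edge incident to a non-critical part and to $R_s$, and checking that in that situation $V_s^i$ was already the unique critical part \emph{before} round $s$. In other words, the real content is showing criticality can only ever ``spread'' to the part Client is repeatedly forced onto, and that part is determined the first time it happens; once that is established the uniqueness is immediate. I expect the bookkeeping about which of $V^1, V^2$ is which (and the role of the first round, where Waiter offers edges at a single vertex) to be the fiddly bit rather than any deep idea.
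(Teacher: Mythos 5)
Your proposal misses the short argument and two of its intermediate steps are false, so as written it does not give a proof. The paper's proof is a single observation: in round $s$ Client attaches a new vertex $v\in R_{s-1}$ to her tree by an edge into, say, $V_{s-1}^1$, so $v$ lands in $V_s^2$; by the \emph{connected} rules no edge between $v$ and any other vertex of $R_{s-1}$ could ever have been offered (both endpoints were outside Client's graph), hence none is claimed by Waiter, and therefore every $u\in R_s$ retains the unclaimed edge $uv$ into $V_s^2$. Thus the part that just received the new vertex is automatically non-critical, and this happens in every round. No contradiction, no induction, and no analysis of \emph{how} criticality arises is needed.

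The concrete gaps in your plan: your step (2) asserts that criticality is monotone ``since $V_s^i\subseteq V_{s'}^i$'', but the containment points the wrong way --- if all edges from $v$ to $V_s^i$ are claimed, the edges from $v$ to the \emph{new} vertices of $V_{s'}^i$ may be unclaimed, and indeed the lemma holds precisely because the freshly attached vertex de-criticalises its own part. Your step (4) claims Client would have ``intercepted'' under part (iii) any offered edge from $w\in R$ into a non-critical part; this fails because Client claims exactly one edge per round while Waiter offers up to $b+1$: Waiter can offer the entire neighbourhood in $V_s$ of some $y_2\in R_s$ together with edges at another vertex $y_1$, Client picks an edge at $y_1$ (via part (ii) or (iii)), and Waiter then claims all edges from $y_2$ to $V_s$, making $y_2$ critical with respect to one part without Client ever reaching part (iv). Consequently your step (3) (``a part becomes critical only when Client plays part (iv)'') is also wrong, and the contradiction you aim for in step (4) does not materialise.
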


\begin{proof} 
	Since Client's graph is always a tree, she connects a new vertex $ v \in R_{s-1} $ to her graph in every round $ s \geq 1 $.
	Assume without loss of generality that the edge connects $v$ to a vertex in $V_{s-1}^1$.
	By the connected rules, no edge incident to $ v $ and any other vertex in $ R_{s-1} $ has been offered, or therefore claimed, at the end of round $s$, so that no edge between $v$ and $R_s$ will have been claimed at the end of round $s$. It follows that $V_s^2$ is not critical at the end of round $s$, implying the statement.
\end{proof}
	
\begin{lemma} \label{lem:noncritical}
	If Waiter offers an edge incident to some $ y \in R_s$ in round $ s + 1 \geq 1$ that is not critical to either $V_s^1$ or $V_s^2$, then Waiter in fact offers every unclaimed edge between $ y $ and $ V_s $ in round $s+1$.
\end{lemma}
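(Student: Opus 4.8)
The plan is to argue by contradiction. Suppose Waiter offers an edge at $y$ in round $s+1$ but keeps some edge $yz$ with $z\in V_s$, still unclaimed at the end of round $s$, out of $W_{s+1}$. I will show that Client, following \cref{str-client-occ}, can end round $s+1$ with an unclaimed edge lying \emph{inside} one of the two parts of her tree, which forces a win for her and contradicts the standing assumption that Waiter wins.

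First I would isolate the mechanism that turns such an inside-edge into a win. Since Client's graph stays a tree with bipartition $V^1\cup V^2$ and the parts only grow, an edge that at some stage lies inside $V_{s'}^i$ lies inside a part of Client's graph forever after; an unclaimed such edge closes an odd cycle, so by part~(i) of \cref{str-client-occ} Client grabs it the moment it is offered, and Waiter can therefore never acquire it. As the game continues so long as some unclaimed edge is incident to Client's graph, this edge must eventually be offered, hence claimed by Client, giving her the win. So it suffices to produce, right after round $s+1$, an unclaimed edge inside $V_{s+1}^1$ or $V_{s+1}^2$; and by part~(ii) of \cref{str-client-occ}---or part~(i), which hands Client the win at once---Client will bring this about as soon as $W_{s+1}$ contains a single edge whose selection would create such an edge. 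The relevant fact here is elementary: if Client claims an edge joining $y$ to one part of $V_s$, then $y$ enters the \emph{other} part, say $V_{s+1}^j$, and every still-unclaimed edge from $y$ to $V_s^j$ becomes an unclaimed edge inside $V_{s+1}^j$.

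It then remains to run a short case analysis on which unclaimed $y$--$V_s$ edges Waiter keeps out of $W_{s+1}$; by the connected rules the edge he does offer at $y$ is of the form $yv$ with $v\in V_s$, and since $z$ lies in one of the two parts, at least one of the following three cases occurs. If Waiter withholds an unclaimed edge $yz_1$ with $z_1\in V_s^1$ and also an unclaimed edge $yz_2$ with $z_2\in V_s^2$, then Client claims $yv$: if $v\in V_s^1$ this puts $y$ into $V_{s+1}^2$ and makes $yz_2$ an unclaimed edge inside $V_{s+1}^2$, while if $v\in V_s^2$ it puts $y$ into $V_{s+1}^1$ and makes $yz_1$ an unclaimed edge inside $V_{s+1}^1$. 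If Waiter withholds an unclaimed edge $yz$ with $z\in V_s^1$ but offers every unclaimed $y$--$V_s^2$ edge, then the hypothesis that $y$ is not critical with respect to $V_s^2$ guarantees that such an edge exists and hence lies in $W_{s+1}$; claiming it puts $y$ into $V_{s+1}^1$ and turns $yz$ into an unclaimed edge inside $V_{s+1}^1$. The last case, where Waiter withholds an unclaimed $y$--$V_s^2$ edge but offers every unclaimed $y$--$V_s^1$ edge, is symmetric and uses non-criticality with respect to $V_s^1$. In each case Client finishes round $s+1$ with an unclaimed edge inside one of her parts, and the second paragraph then yields the contradiction; therefore no unclaimed $y$--$V_s$ edge can be withheld.

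The step I expect to require the most care, and which should be spelled out, is exactly this case distinction: one must verify that whenever some unclaimed $y$--$V_s$ edge is kept out of $W_{s+1}$, the set $W_{s+1}$ nonetheless contains an edge that lets Client push $y$ into the part matching that withheld edge, and it is precisely here that the full hypothesis ``$y$ non-critical to both $V_s^1$ and $V_s^2$'' enters. The remaining ingredients---that an inside-edge persists and eventually yields a win, and that Client's strategy forces her to make a part~(ii) move---are routine given the definitions already set up.
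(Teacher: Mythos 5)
Your proof is correct and follows essentially the same route as the paper's: an offered edge at $y$ on one side together with a withheld edge on the other side lets Client invoke part~(ii) of \cref{str-client-occ} to create a persistent unclaimed edge inside a part, which she eventually claims by part~(i) to close an odd cycle, and non-criticality supplies the offered edge on the side you need. The paper merely organises this as a two-stage argument (first all $y$--$V_s^2$ edges, then all $y$--$V_s^1$ edges) rather than your three-case split, but the content is the same.
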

	
\begin{proof}
	Suppose that Waiter offers the edge $ x_1 y $ for some, without loss of generality, $ x_1 \in V_s^1 $. We first show that Waiter offers all unclaimed edges between $y$ and $V_s^2$.

  Suppose for contradiction that there is an unclaimed edge $x_2 y$ for some $x_2 \in V_s^2$ which Waiter did not offer. Then, Client will choose $x_1 y$ (or an equivalent edge) by part (ii) of her strategy.
	Consequently, $x_2 y$ will be an unclaimed edge inside $ V_{s+1}^2 $ which Client, following part (i) of her strategy, will eventually pick and therefore close an odd cycle.
	This contradicts the assumption that Waiter wins the game and he therefore offers all unclaimed edges between $ y $ and $ V_s^2 $.
	
	Since $ y $ is not critical with respect to $ V_s^2 $, there is at least one unclaimed edges $ x_2 y $ for some $ x_2 \in V_s^2 $ which Waiter offers by the first part. Repeating the argument with $ x_2 $ and $ y $, we conclude that Waiter also offers all unclaimed edges between $ y $ and $ V_s^1 $.
\end{proof}
	
\begin{figure}
\centering
\begin{tikzpicture}
\edef \xdiam {3cm}
\edef \ydiam {\xdiam/2}
\edef \vxdiam {0.1cm}

\begin{scope}
\node[draw=black,fill=red!50, ellipse, outer sep = 0cm, inner sep = 0cm, minimum width = \xdiam, minimum height = \ydiam] (V1) at (0,0) {$ V_s^1 $};

\node[draw=black,fill=red!50, ellipse, outer sep = 0cm, inner sep = 0cm, minimum width = \xdiam, minimum height = \ydiam] (V2) at (0,2cm) {$ V_s^2 $};

\node[fill = black, circle, outer sep = 0cm, inner sep=0cm, minimum size = \vxdiam] (x) at ($(V1) + (0.3*\xdiam,0)$) {} (x) node[below] {$ x_1 $};

\node[fill = black, circle, outer sep = 0cm, inner sep=0cm, minimum size = \vxdiam] (y) at ($(V2) + (1.3*\xdiam/2,0)$) {} (y) node[right] {$ y $};

\foreach \x in {0,...,7}{  
	\pgfmathsetmacro\a{(-135 + 90/(2*7-1)) + 2*\x*90/(2*8-1)}
	\pgfmathsetmacro\b{135 - 2*\x*90/(2*8-1)}
	\coordinate (a\x) at ($(0,2cm) + (\a:1.5cm and 0.75cm)$);
	\coordinate (b\x) at ($(\b:1.5cm and 0.75cm)$);
}

\foreach \x in {0,1,...,7}
\draw[blue] (a\x) -- (b\x);

\foreach \x [count= \i] in {0,1,...,6}
\draw[blue] (a\x) -- (b\i);

\draw[dashed, blue] (x) -- (y);
\end{scope}

\begin{scope}[xshift=\xdiam * 2]
\node[draw=black,fill=red!50, ellipse, outer sep = 0cm, inner sep = 0cm, minimum width = \xdiam, minimum height = \ydiam] (V1) at (0,0) {$ V_s^1 $};

\node[draw=black,fill=red!50, ellipse, outer sep = 0cm, inner sep = 0cm, minimum width = \xdiam, minimum height = \ydiam] (V2) at (0,2cm) {$ V_s^2 $};

\node[fill = black, circle, outer sep = 0cm, inner sep=0cm, minimum size = \vxdiam] (x) at ($(V1) + (0.3*\xdiam,0)$) {} (x) node[below] {$ x_1 $};

\node[fill = black, circle, outer sep = 0cm, inner sep=0cm, minimum size = \vxdiam] (x') at ($(V2) + (0.3*\xdiam,0)$) {} (x') node[above] {$ x_2 $};

\node[fill = black, circle, outer sep = 0cm, inner sep=0cm, minimum size = \vxdiam] (y) at ($(V1) + (1.3*\xdiam/2,0)$) {} (y) node[right] {$ y $};

\foreach \x in {0,...,7}{  
	\pgfmathsetmacro\a{(-135 + 90/(2*7-1)) + 2*\x*90/(2*8-1)}
	\pgfmathsetmacro\b{135 - 2*\x*90/(2*8-1)}
	\coordinate (a\x) at ($(0,2cm) + (\a:1.5cm and 0.75cm)$);
	\coordinate (b\x) at ($(\b:1.5cm and 0.75cm)$);
}

\foreach \x in {0,1,...,7}
\draw[blue] (a\x) -- (b\x);

\foreach \x [count= \i] in {0,1,...,6}
\draw[blue] (a\x) -- (b\i);

\draw[dashed, blue] (x') -- (y);
\end{scope}

\end{tikzpicture}
\caption{The situation in \cref{lem:noncritical}.
Waiter's edges are in red and Client's in blue.}
\label{fig-cw-oc-waiter}
\end{figure}
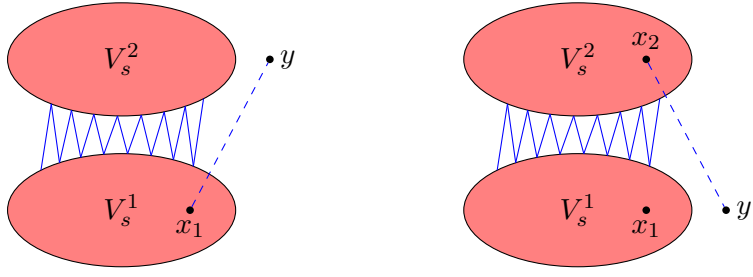
	
\begin{lemma}\label{lem:critical}
	If $ v \in R_s $ is critical to either $V_s^1$ or $V_s^2$, then there is exactly one unclaimed edge between $ v $ and $ V_s $.
\end{lemma}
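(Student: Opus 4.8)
The plan is to follow one fixed vertex $v \in R_s$ through the game and prove, by induction on the round, that $v$ is always in one of two states: \textbf{(N)}, in which $v$ is critical to neither $V_s^1$ nor $V_s^2$, or \textbf{(C)}, in which $v$ is critical to exactly one part and has exactly one unclaimed edge to $V_s$ (which then necessarily joins $v$ to the other part). Since state \textbf{(N)} is incompatible with the hypothesis that $v$ is critical to a part, the dichotomy immediately yields the lemma. Throughout I would use the standing facts: since Waiter wins and Client follows \cref{str-client-occ}, parts (i) and (ii) of her strategy are never applicable (otherwise, as explained at the start of this section, Client eventually closes an odd cycle); Client's graph is a tree that gains exactly one vertex per round; and Client never claims an edge at $v$, because $v \in R_s$. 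I would also record the elementary observation that an edge both of whose endpoints are currently outside Client's graph has never been offered (offered edges must be incident to Client's graph), so when a vertex $u$ joins Client's graph in some round, the edge $vu$ is still unclaimed at the end of that round.

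For the base case, before Waiter ever offers an edge at $v$ the vertex has no Waiter-edges, so --- both parts being non-empty from round $1$ on --- it is critical to neither part, i.e.\ in state \textbf{(N)}. For the inductive step out of \textbf{(N)}: if Waiter offers no edge at $v$ this round, $v$ stays in \textbf{(N)} because both parts only grow; if Waiter does offer some edge at $v$, then since $v$ is critical to neither part, \cref{lem:noncritical} forces him to offer \emph{all} unclaimed edges between $v$ and Client's graph, Client takes none of them, so $v$ becomes fully joined to the old graph and its only remaining unclaimed edge is the one to the vertex added this round --- hence $v$ passes to state \textbf{(C)}.

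The heart of the argument, and the step I expect to be the main obstacle, is the inductive step out of state \textbf{(C)}. Say $v$ is critical to $V_s^i$; by \cref{rmk-criticalvx}, $V_s^{3-i}$ is then the unique non-critical part, and the unique unclaimed edge at $v$ runs to $V_s^{3-i}$. First I would show that the vertex $u$ added next round is joined to the non-critical part $V_s^{3-i}$ (so that it enters the side $V^i$): if Client instead took an offered edge $uw$ with $w \in V_s^i$, then $u$, lying in $R_s$, is critical to neither part --- not to $V_s^i$ since $uw$ is unclaimed, and not to $V_s^{3-i}$ since that part is non-critical --- so \cref{lem:noncritical} would force Waiter to also offer some unclaimed edge from $u$ to $V_s^{3-i}$, which is a part-(iii)-eligible edge; but then Client's strategy would have her take such an edge rather than $uw$, a contradiction. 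With this established, the update is routine: $u$ is added to $V^i$, the part $V^{3-i}$ is unchanged, and $v$ gains the new unclaimed edge $vu$. If Waiter offered $v$'s old missing edge this round, that edge is claimed (Client cannot take an edge at $v$), leaving $vu$ as the unique missing edge and making $v$ critical to $V^{3-i}$ --- state \textbf{(C)} again; if he did not, $v$ ends up with exactly one missing edge to each part and is therefore critical to neither --- state \textbf{(N)}. Either way $v$ remains in $\textbf{(C)} \cup \textbf{(N)}$, which completes the induction and hence the proof.
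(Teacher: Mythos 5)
Your proof is correct and follows essentially the same route as the paper's: the paper packages the argument as a minimal-counterexample claim, but its two cases (whether $v$ was critical in the preceding round or not) correspond exactly to your inductive steps out of states \textbf{(N)} and \textbf{(C)}, and the decisive contradiction --- \cref{lem:noncritical} forcing Waiter to offer an edge to the non-critical part, which Client then prefers by part~(iii) of \cref{str-client-occ}, together with \cref{rmk-criticalvx} --- is identical. Your explicit two-state invariant is a clean repackaging rather than a different argument.
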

	
\begin{proof}
	Assume that the statement does not hold and let $ s_0 \geq 1$ be the first time such that there is a vertex $ v \in R_{s_0} $ critical to, without loss of generality, $V_s^1$ and there are at least two unclaimed edges between $ v $ and $ V_s^2 $.
	
	If $ v $ was not critical with respect to $ V_{s_0-1}^1 $, then in round $s_0$ Waiter offered at least one edge incident to $ v $ and therefore all edges between $ v $ and $ V_{s_0-1} $ by \cref{lem:noncritical}. This contradicts the assumption that $v$ is not critical to $V_{s_0}^2$.
				
	Hence $ v $ was critical with respect to $ V_{s_0-1}^1$ and by our choice of $ s_0 $ there was exactly one unclaimed edge between $ v $ and $ V_{s_0-1}^2 $, see \cref{fig-cw-oc-proof}.
	Client therefore must have claimed an edge $ xy $ with $ x \in V_{s_0-1}^1 $ and $ y \in R_{s_0-1} $ in round $ s_0 $ to enlarge $ V_{s_0-1}^2 $.
	But $ y $ was not critical with respect to $ V_{s_0-1}^2 $ by \cref{rmk-criticalvx} and hence Waiter offered at least one edge incident to $ y $ and $ V_{s_0-1}^2 $ by \cref{lem:noncritical}.
	But following part (iii) of \cref{str-client-occ}, Client would have chosen this edge instead, a contradiction.
\end{proof}
	
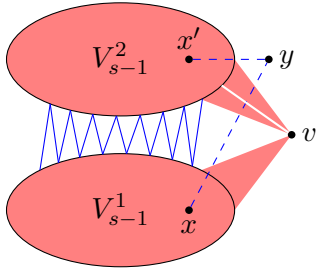
\begin{figure}
\centering
\begin{tikzpicture}
\edef \xdiam {3cm}
\edef \ydiam {\xdiam/2}
\edef \vxdiam {0.1cm}

\begin{scope}
\node[draw=black,fill=red!50, ellipse, outer sep = 0cm, inner sep = 0cm, minimum width = \xdiam, minimum height = \ydiam] (V1) at (0,0) {$ V_{s-1}^1 $};

\node[draw=black,fill=red!50, ellipse, outer sep = 0cm, inner sep = 0cm, minimum width = \xdiam, minimum height = \ydiam] (V2) at (0,2cm) {$ V_{s-1}^2 $};

\node[fill = black, circle, outer sep = 0cm, inner sep=0cm, minimum size = \vxdiam] (v) at ($(V1) + (1.5*\xdiam/2, 1cm)$) {} (v) node[right] {$ v $};

\node[fill = black, circle, outer sep = 0cm, inner sep=0cm, minimum size = \vxdiam] (x) at ($(V1) + (0.3*\xdiam,0)$) {} (x) node[below] {$ x $};

\node[fill = black, circle, outer sep = 0cm, inner sep=0cm, minimum size = \vxdiam] (x') at ($(V2) + (0.3*\xdiam,0)$) {} (x') node[above] {$ x' $};

\node[fill = black, circle, outer sep = 0cm, inner sep=0cm, minimum size = \vxdiam] (y) at ($(V2) + (1.3*\xdiam/2,0)$) {} (y) node[right] {$ y $};

\foreach \x in {0,...,7}{  
	\pgfmathsetmacro\a{(-135 + 90/(2*7-1)) + 2*\x*90/(2*8-1)}
	\pgfmathsetmacro\b{135 - 2*\x*90/(2*8-1)}
	\coordinate (a\x) at ($(0,2cm) + (\a:1.5cm and 0.75cm)$);
	\coordinate (b\x) at ($(\b:1.5cm and 0.75cm)$);
}

\foreach \x in {0,1,...,7}
\draw[blue] (a\x) -- (b\x);

\foreach \x [count= \i] in {0,1,...,6}
\draw[blue] (a\x) -- (b\i);

\draw[dashed, blue] (x) -- (y);
\draw[dashed, blue] (x') -- (y);

\begin{pgfonlayer}{background}
\fill[red!50] ($(V1) + (1.5*\xdiam/2, 1cm)$) -- (V2.south east) -- (V2.east) -- cycle;
\fill[red!50] ($(V1) + (1.5*\xdiam/2, 1cm)$) -- (V1.north east) -- (V1.east) -- cycle;
\end{pgfonlayer}

\draw[white, thick] (v) -- ($(V2) + (-27.5:1.5cm and 0.75cm)$);

\end{scope}
\end{tikzpicture}
\caption{The situation in \cref{lem:critical}.
Waiter's edges are in red and Client's edges are in blue.
The white edge denotes an unclaimed edge and the dashed edges are two of the edges which are offered to Client.}
\label{fig-cw-oc-proof}
\end{figure}
	
Equipped with these lemmas, it is now easy to prove \cref{thm-client-occ}.

\begin{proof}[Proof of \cref{thm-client-occ}]
	By \cref{lem:noncritical,lem:critical}, whenever Waiter offers one edge incident to some $ v \in R_s $, he in fact offers all edges between $ v $ and $ V_s $ .
	It follows that there will never be an unclaimed edge inside $ V_s $ and therefore Client does not give up the game by following part (v) of her strategy.
	Hence the game ends after some
	\begin{equation*}
		t \geq \frac{\binom{n}{2}}{b+1}  = \frac{n(n-1)}{2\lceil n/2 \rceil - 2} \geq \frac{n(n-1)}{ n - 1} = n
	\end{equation*}
	rounds with all edges having been claimed.
	Since $ G_C(t) $ is a tree, this implies 
	\begin{equation*}
		|V_t| = |E_t| + 1 = t+1 \geq n+1,
	\end{equation*}
	a contradiction.
\end{proof}

\section{Concluding remarks and open questions} \label{sec-remarks}

Two other variants of the odd cycle game that could be studied are Avoider-Enforcer and Waiter-Client games (not to be confused with the Client-Waiter games studied in this paper).
In the biased Avoider-Enforcer odd cycle game, Enforcer wants to force Avoider to claim all edges of an odd cycle, whereas the appropriately named Avoider tries to avoid just that.
Since there are similar issues regarding monotonicity of the bias as in the Client-Water variant, there is a \emph{monotone} version of this game, where Avoider claims \emph{at least} one and Enforcer claims \emph{at least} $b$ edges in each round, as well as a \emph{strict} version, where Avoider and Enforcer respectively claim \emph{exactly} one and $b$ edges.
For more details on Avoider-Enforcer games see~\cite{TicTacToe,HKS07,AE-rules,Tiborbook}.

Hefetz, Krivelevich, Stojaković and Szabó~\cite{HKSS2009b} considered the unbiased Avoider-Enforcer odd cycle game, that is $b = 1$, and proved that Enforcer wins rather fast.
Clemens, Ehrenmüller, Person and Tran~\cite{AvoiderAcyclic} proved that for a bias satisfying $b \geq 200n\ln n$, Avoider can ensure that his graph is a forest for every but the last round of the game.
Hefetz, Krivelevich, Stojaković and Szabó~\cite{HKSScolorability} considered various Avoider-Enforcer games, among them the strict version of the biased odd cycle game.
In the strict version, there is an \emph{upper} threshold bias $b^+_{ae}(\mathcal{OC}_n)$ and a \emph{lower} threshold bias $b^-_{ae}(\mathcal{OC}_n)$ with the property that Avoider wins if $b \geq b^+_{ae}(\mathcal{OC}_n)$ and Enforcer wins if $b < b^-_{ae}(\mathcal{OC}_n)$.
The authors in~\cite{HKSScolorability} showed that $cn \leq b^-_{ae}(\mathcal{OC}_n) \leq b^+_{ae}(\mathcal{OC}_n) \leq n^{3/2}$ for some constant $c>0$.

\begin{question}
	What is the threshold bias for the Avoider-Enforcer odd cycle games?
\end{question}

Waiter-Client games are similar to Client-Waiter games, except now Client's goal is to avoid claiming all edges of a winning set.
Bednarska-Bzd\c{e}ga, Hefetz, Krivelevich, and Łuczak considered Waiter-Client games in \cite{WC-CW-probint}.
A special case of their result, regarding the Waiter-Client odd cycle game, states that $n - 4\lceil n^{3/4} \rceil + 1 \leq b_{wc}(\mathcal{OC}_n) \leq 1.00502n$ and they further conjectured that the lower bound is asymptotically correct.


\bigskip

One goal of \cref{thm-mb-oc-breaker} was to show that, if the answer to \cref{qu-mb-oc} is \emph{``Yes.''}, then we have separated the regular threshold from the connected one.
Motivated by this, we believe that it is of interested to determine the connected thresholds of various other games.
One classic game is the connectivity game, for which Gebauer and Szab\' o~\cite{ConnGame} showed that the threshold bias for the Maker-Breaker variant is asymptotically equal to $n/\ln n$.
Another classic game is the Hamiltonicity game, for which Krivelevich~\cite{HamCyc} showed that the threshold bias for the Maker-Breaker game is $(1+o(1))n/ \ln n$.
However, it is easy to see that, with a bias of $b=2$, Breaker can isolate a vertex when playing against connected Maker.
This means that for the Hamiltonicity and the connectivity games the regular biases differ quite drastically from the connected ones.
This is true as well for other games where Maker's goal is to occupy a spanning subgraph of $K_n$.

One can ask whether we observe the same phenomenon for other Maker-Breaker games.
For example, in the Maker-Breaker $H$-game, Maker's goal is to fully claim a copy of a fixed graph $H$.
Bednarska and \L uczak~\cite{BedLuczRandStr} showed that the threshold bias for the Maker-Breaker $H$-game is $\Theta (n^{1/m_2(H)})$ where $m_2(H) = \max_{H'\subseteq H} (e(H')-1)/(v(H')-2)$.
Kusch, Rué, Spiegel and Szabó~\cite{RKSS18} generalised their results to a large class of games, including van der Waerden games introduced by Beck in~\cite{Beck81}.
Since in these proofs Maker's strategy is not connected, it could be of interest to determine whether the threshold bias of these games is equal to the regular one.
\begin{question}
	What is the threshold bias for the connected Maker-Breaker $H$-game?
\end{question}

Lastly, let us note that, since a graph is $2$-colourable if and only if is does not contain odd cycles, one can view the odd cycle game as the non-$2$-colourability game.
Hence, a natural generalisation of the odd cycle game would be to study the non-$k$-colourability game for integers $k \geq 3$.
It was proved in \cite{HKSScolorability} that the threshold bias for the Maker-Breaker non-$k$-colourability game satisfies $s_1 n \leq b_{mb}(\mathcal{NC}_n^k) \leq s_2 n$, where $\mathcal{NC}_n^k = \{E(C) : C \text{ subgraph of }K_n \text{ that is not } k \text{-colourable} \}$ and $s_1 = s_1(k)$ and $s_2 = s_2(k)$ are constants depending only on $k$.
It would be interesting to determine whether the threshold biases of the connected and the regular non-$k$-colourability game are equal or not.

\begin{question}
	Do we have $b_{mb}(\mathcal{NC}_n^k) \sim b_{mb}^c(\mathcal{NC}_n^k)$?
\end{question}

\medskip
\noindent \textbf{Acknowledgements.} The research on this project was initiated and developed during joint research workshops of Tel Aviv University and the Freie Universit\"at Berlin on Positional Games in 2016 and on Graph and on Hypergraph Colouring Problems in 2018. We would like to thank the German-Israeli Foundation (GIF grant number G-1347-304.6/2016) and both institutions for their support.
Furthermore, we would like to thank Michael Krivelevich and Gal Kronenberg for fruitful discussions.
	
\bibliographystyle{abbrv}
\bibliography{./input/bib}

~\\
~\\
\noindent
\begin{tabular}{cc}
	\begin{minipage}[t]{\linewidth}
		\textbf{Jan Corsten}\\
		The London School of Economics and Political Science, Department of Mathematics, London WC2A 2AE, UK. Supported by an LSE studentship.
		
		E-mail: \href{mailto:j.corsten@lse.ac.uk}{j.corsten@lse.ac.uk}.
	\end{minipage}
	\\~\\
	\begin{minipage}[t]{\linewidth}
		\textbf{Adva Mond}\\
		Tel Aviv University,
		School of Mathematical Sciences, 
		Israel.
		
		E-mail: \href{mailto:advamond@mail.tau.ac.il}{advamond@mail.tau.ac.il}.
	\end{minipage}
	\\~\\
	\begin{minipage}[t]{\linewidth}
		\textbf{Alexey Pokrovskiy}\\ 
		Birkbeck College,
		University of London,
		United Kingdom.
		
		Email: \href{mailto:dr.alexey.pokrovskiy@gmail.com}{dr.alexey.pokrovskiy@gmail.com}.
	\end{minipage}
	\\~\\
	\begin{minipage}[t]{\linewidth}
		\textbf{Christoph Spiegel}\\
		Universitat Polit\`ecnica de Catalunya and Barcelona Graduate School of Mathematics, Department of Mathematics, Edificio Omega, 08034 Barcelona, Spain. 
		Supported by a Berlin Mathematical School Scholarship and by the Spanish Ministerio de Econom\'ia y Competitividad FPI grant under the project MTM2014-54745-P and the Mar\'ia de Maetzu research grant MDM-2014-0445.
		
		E-mail: \href{mailto:christoph.spiegel@upc.edu}{christoph.spiegel@upc.edu}.
	\end{minipage}
	\\~\\
	\begin{minipage}[t]{\linewidth}
		\textbf{Tibor Szab\'o}\\ 
		Institute of Mathematics, FU Berlin, 14195 Berlin, Germany. Research supported in part by GIF grant No. G-1347-304.6/2016.
		
		E-mail: \href{mailto:szabo@math.fu-berlin.de}{szabo@math.fu-berlin.de}.
	\end{minipage}
\end{tabular}
	
\end{document}